\theoremstyle{plain}
\newtheorem{Theorem}{Theorem}
\newtheorem{Lemma}{Lemma}
\newtheorem{Corollary}{Corollary}
\newtheorem{Proposition}{Proposition}
\theoremstyle{definition}
\newtheorem{Definition}{Definition}
\theoremstyle{remark}
\newtheorem{Remark}{Remark}
\title[Noise Induced Order for Skew Products]{Noise Induced Order for Skew-Products over a Non-Uniformly expanding base}
\author{A. Blumenthal, I. Nisoli}
\newcommand{\pd}{\partial}
\newcommand{\Leb}{\operatorname{Leb}}
\newcommand{\Lip}{\operatorname{Lip}}
\newcommand{\R}{\mathbb R}
\newcommand{\uo}{{\underline{\omega}}}
\newcommand{\ue}{{\underline{\eta}}}
\newcommand{\Fc}{\mathcal F}
\renewcommand{\P}{\mathbb P}
\begin{document}

\begin{abstract}
Noise-induced order is the phenomenon by which the chaotic regime of a 
deterministic system is destroyed in the presence of noise. In this manuscript, 
we establish noise-induced order for a natural class of systems of dimension $\geq 2$ 
consisting of a fiber-contracting skew product a over nonuniformly-expanding 1-dimensional system. 
\end{abstract}

\maketitle

\section{Introduction}
In recent times, interest in random dynamical systems has been greatly stimulated 
due to their use in applications and their scientific relevance 
in modeling systems driven by external or internal sources of noise. 
It is of both theoretical and practical interest to understand
the ways in which dynamical behavior can change under the
influence of noise. Notable examples include 
 stochastic resonance phenomena (e.g., 
 \cite{GammaEtAl, Benzi} or the recent mathematical work \cite{SatoLamb}), where
a stable system can be excited in the presence of noise
into producing oscillatory behavior; and noise-induced
chaos, where a sufficient amount of noise can induce
chaotic behavior in the random dynamics (e.g., 
\cite{Gao, Gao2, Blu}). 


The topic of this manuscript is noise-induced order (NIO), 
referring to scenarios where the presence of noise
induces stabilization in a previously chaotic deterministic system, quantitatively measured through a transition of the top Lyapunov exponent from positive 
to negative as the noise amplitude increases.
This surprising phenomenon was first observed by numerical experiments in a one dimensional 
model of the Belosouv-Zhabotinsky reaction \cite{MatTs}. A mathematical proof of this phenomenon
was given only recently in \cite{GaMoNi} via computer-assistance. The recent paper \cite{Ni} describes a
 sufficient condition for the existence of NIO in one
dimensional non-uniformly expanding systems. 



Our purpose here is to establish sufficient conditions for 
the existence of Noise Induced Order in fiber-contracting 
skew-products over a non-uniformly hyperbolic base dynamics in the presence of additive noise. 
As we show here, our abstract framework applies to a fundamental model in dynamics: the Poincaré map 
of a transverse section for the contracting geometric Lorenz flow, 
which we refer to hereafter as the \emph{contracting Lorenz $2$-dimensional map}.


The classical Lorenz model \cite{Lorenz63, Tucker} is an important example in nonlinear sciences
and a prototypical example of deterministic chaotic behavior. 
A related model, the so-called geometric Lorenz flow \cite{Guck}, was
constructed so as to capture qualitative features of
the Poincar\'e map for the Lorenz model at a natural transversal section. 
Like the classical Lorenz flow, geometric Lorenz flows
admit a saddle equilibrium at the origin, but due to the flexibility of their construction it is possible to parametrically adjust 
features of these models such as the eigenvalues at the origin. 
Contracting Lorenz flow refers to these geometric models when the contracting eigenvalues `dominate' the expanding eigenvalue. 

Contracting Lorenz flow has been extensively studied. 
Metzger and Morales 
proved its stochastic stability \cite{Me, MeMo}, while Alves and Soufi proved 
statistical stability of the associated Poincar\'e maps \cite{AlSou}. Galatolo, Nisoli and Pacifico 
proved that the $2$-dimensional map at a Rovella parameter exhibits 
exponential decay of correlations with respect to Lipschitz observables \cite{GaNiPa}. 
A thermodynamic formalism for contracting Lorenz maps was developed by Pacifico and Todd 
\cite{PaTo}.  Recent works of Alves -- Khan and Araujo proved, respectively,
that the contracting Rovella flow, a perturbed variant of contracting Lorenz flow, is not statistically stable if we consider 
all the perturbations in the $C^3$ topology \cite{AlKhan}, but is statistically stable 
if we consider perturbations inside the family of so-called Rovella parameters \cite{Ar}.

Our result on noise-induced order for the contracting Lorenz map leads us to conjecture that the contracting Lorenz flow itself exhibits noise-induced order. 
The proof of this result, if true, would likely require computer-assisted tools. 


\subsubsection*{Plan for the paper}
In Section \ref{sec:setting}
we formulate an abstract framework of fiber-contracting skew products and formally state our result on Noise-Induced Order. The proof of our main result is given in Section \ref{sec:proofs}, as well as our application to the contracting Lorenz map. 
Finally, in Section \ref{sec:conclusion} we provide an outlook and some concluding remarks, including a conjecture regarding NIO for the contracting Lorenz flow. 

\section{Setting and statement of results}\label{sec:setting}

\subsubsection*{Basic setup}

Throughout, we consider random perturbations of a fixed deterministic
skew product map $F : [-1, 1]^{d + 1} \circlearrowleft$ exhibiting nonuniform hyperbolicity, where $d \geq 1$. The mapping $F$ is of the form
\[
F(x, y) = (T(x), G(x,y))
\]
where $T : [-1,1]\circlearrowleft$ (the \emph{base dynamics}) and $G : [-1,1] \times [-1,1]^d \to [-1,1]^d$ (the \emph{fiber dynamics}) are mappings on their respective domains. Precisely: 
\begin{itemize}
	\item[(i)] $x \mapsto T(x)$ is piecewise\footnote{It is important to note that we will allow discontinuities in $x \mapsto T(x)$ and $x \mapsto G(x,y)$ to accommodate our intended application to the contracting Lorenz map. This lack of continuity generates some issues we deal with throughout our treatment of the abstract framework. } smooth with respect to a finite partition of $[-1,1]$ into disjoint intervals, while $\{ T' = 0 \}$ is finite and $T_* \Leb \ll \Leb$ (here, $T_*$ is the pushforward of a measure and $\Leb = \Leb_{[-1,1]}$ is Lebesgue measure on $[-1,1]$). 
	Additionally, $\log |T'(x)| \in L^1(dx)$. 
	
	\item[(ii)] $(x,y) \mapsto G(x,y)$ is piecewise smooth with respect to a finite partition of $[-1,1]^{d + 1}$ into finitely many disjoint measurable sets with nonempty interior, while for each fixed $x \in [-1,1]$ we have that 
	$G(x, \cdot) : [-1,1]^d \to [-1,1]^d$ is a local diffeomorphism onto its image. 
\end{itemize}

To define our perturbations, given $\omega = (\omega^1, \cdots, \omega^{d + 1}) \in \R^{d + 1}$, we set
\[
F_\omega(x, y) = F(x,y) + \omega \text{ mod 2} \, , 
\]
where the componentwise operation ``mod 2'' translates a point $r \in \R$ to 
its equivalence class $r$ mod 2 $\in (-1,1]$. Given 
a sequence $\uo = (\omega_1, \omega_2, \cdots)$ and $n \geq 1$, we consider the random compositions
\[
F^n_\uo = F_{\omega_n} \circ \cdots \circ F_{\omega_2} \circ F_{\omega_1} \, .
\]

To define the probabilistic law our perturbations take, let $\rho : \R\to \R_{\geq 0}$
be a BV density (what we refer to as a \emph{mother kernel}) with $\operatorname{Supp} \rho = [-1,1]$ (in particular, $\rho > 0$ almost-everywhere on $[-1,1]$), and for $\xi \in \R_{> 0}$ define 
\[
\rho^\xi(x) = \frac{1}{\xi} \rho\left(\frac{x}{\xi} \right) \, .
\]
We interpret the value $\xi$ as a \emph{noise amplitude}, and assume throughout
that $\omega_1, \omega_2, \cdots$ are IID $\R^{d + 1}$-valued random variables, 
each component of which is distributed like $\rho^\xi$. 
\subsubsection*{Sufficient conditions for noise-induced order}

For the fiber dynamics, we assume the following fiber-contraction property. 
\begin{itemize}
	\item[(F)] There is a constant $c > 0$ such that 
	$G(x, \cdot): [-1,1]^d \circlearrowleft$ satisfies 
	\[
	\Lip (G(x, \cdot)) \leq c < 1
	\]
	for each fixed $x \in [-1, 1]$. 
\end{itemize}

For the base dynamics, we will assume the following: 

\begin{itemize}
	\item[(B)(i)] (Deterministic dynamics) The deterministic base 
	dynamics $T : [-1,1] \circlearrowleft$ admits a unique, ergodic, absolutely continuous invariant measure $\mu_0$ with density $f_0$ which is $> 0$ almost everywhere. 
\end{itemize}
Under this assumption, the Lyapunov exponent
	\[
	\lambda_{\rm base}(0) := \lim_n \frac1n \log |(T^n)'(x)|
	\]
exists and is $x$-independent for Leb. almost-every $x$ by the Birkhoff ergodic theorem. 
We will additionally assume:
\begin{itemize}
	\item[(B)(ii)] (Positive LE) We have $\lambda_{\rm base}(0) > 0$. 
\end{itemize}

Our next assumptions refer to the base dynamics in the presence of noise. 
Below, given $\eta \in \R$ 
we write $T_\eta (x) = T(x) + \eta \text{ mod 2}$. For $\xi > 0$, we let $\eta_1, \eta_2, \cdots$ be an IID sequence distributed with law $\rho^\xi$. Given a sequence $\ue = (\eta_1, \eta_2, \cdots)$, we write
\[
T^n_\ue = T_{\eta_n} \circ \cdots \circ T_{\eta_1} \, .
\]

\begin{itemize}
	\item[(B)(iii)] For every $\xi > 0$ the Markov chain
	\[
	X_n := T_{\eta_n}(X_{n-1}) = T^n_\ue(X_0)
	\]
	on $[0,1]$ admits a unique stationary measure
	$\mu_\xi$ with density $f_\xi > 0$ Leb. almost everywhere. Moreover, this Markov chain is 
	\emph{exponentially mixing in} $L^1(dx)$: that is, for all $\xi > 0$ there exist $C_\xi, \gamma_\xi > 0$ such that if 
	$g_0 \in L^1(dx)$ is an arbitrary density on $[-1,1]$ and $g_n$ denotes the density of the law of 
	$X_n$ where $X_0$ has law $g_0$, then 
	\[
	\| g_n - f_\xi\|_{L^1} \leq C_\xi e^{- \gamma_\xi n} \,. 
	\]
	\end{itemize}
	
	Let $\lambda_{\rm base}(\xi)$ denote the Lyapunov exponent of $T^n_\ue$, which as we confirm below
exists and is almost-surely constant over typical initial conditions and with probability 1 for all $\xi > 0$. 

\smallskip
	\begin{itemize}
	\item[(B)(iv)] The Lyapunov exponent in the base is continuous at $0$
	with respect to the noise size $\xi$, i.e., $\lambda_{\rm base}(\xi) \to \lambda_{\rm base}(0)$ as $\xi \to 0$. 
\end{itemize}

Recall that a stationary measure $\mu$ for a Markov chain $(X_n)$ is \emph{ergodic}
if $(X_n)$-invariant sets have $\mu$-measure $0$ or $1$; here, a set $A \subset [0,1]$ is 
called $(X_n)$-invariant if with probability 1, $X_0 \in A$ if and only if $X_1 \in A$ 
\cite{Kifer}. 

The last assumption we make is to ensure that in the large-noise limit, 
the base dynamics $T$ experiences contraction on a large proportion of its domain. 
\begin{itemize}
	\item[(C)] We have
	\[
	\int_{-1}^1 \log |T'(x)| dx < 0 \,. 
	\]
\end{itemize}

As we show below (Lemma \ref{lem:zeroNoiseLEExists}), assumptions (F), (B) and (C) imply that for the deterministic map
$F$, the Lyapunov exponent
\[
\lambda(0) = \lim_n \frac1n \log \| D_{(x, y)}F^n \|
\]
exists and is constant over Lebesgue-typical $(x, y) \in [-1,1]^{d + 1}$, while 
at positive noise $\xi > 0$, the limit
\[
\lambda(\xi) = \lim_n \frac1n \log \| D_{(x,y)} F^n_\uo\|
\]
exists and is constant over Lebesgue-typical $(x, y) \in [-1,1]^{d + 1}$ and 
with probability 1 (Corollary \ref{cor:lyapRegSkewProd}). Roughly speaking, we will show the following: 
\begin{itemize}
\item[(a)] $\xi \mapsto \lambda(\xi)$ is continuous
\footnote{We actually only prove continuity of $\lambda$ where $\lambda_{\rm base}(\xi) > 0$, 
but this is sufficient for our purposes; see Proposition \ref{prop:LEinFiber} for details}. 
\item[(b)] $\lambda(0) > 0$ 
\item[(c)] $\limsup_{\xi \to \infty} \lambda(\xi) < 0$ 
\end{itemize}
Taken together, these imply noise-induced order, i.e., the
 existence of a transition from $\lambda(\xi) > 0$ to 
$\lambda(\xi) < 0$. Precisely, we have the following
\begin{Theorem}[Sufficient condition for noise-induced order] \label{thm:NIO}
Under assumptions (F),(B) and (C) above, there exist noise amplitudes $\xi_+ < \xi_-$ 
such that $\lambda(\xi) > 0$ for $\xi \in [0,\xi_+)$ and $\lambda(\xi) < 0$ for $\xi \in (\xi_-, \infty)$. 
\end{Theorem}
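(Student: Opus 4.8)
The plan is to establish the three bullet-pointed properties (a), (b), (c), from which the theorem follows immediately: by (b) and (a) there is a neighborhood $[0, \xi_+)$ on which $\lambda(\xi) > 0$, and by (c) there is a threshold beyond which $\lambda(\xi) < 0$, so picking any $\xi_- $ large enough that $\lambda(\xi) < 0$ for all $\xi > \xi_-$ (possible since $\limsup_{\xi\to\infty}\lambda(\xi) < 0$) completes the argument. So the real content is (a)--(c), and I would organize the proof around the decomposition of the skew-product cocycle into its base part and its fiber part. Because $F(x,y) = (T(x), G(x,y))$ is a skew product, the derivative $D_{(x,y)}F^n$ is block-triangular, with diagonal blocks $(T^n)'(x)$ and $\prod_{k=0}^{n-1} D_yG(x_k, \cdot)$ (evaluated along the orbit), plus an off-diagonal block built from the $\partial_x G$ terms. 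The key structural observation is that the norm growth $\frac1n\log\|D_{(x,y)}F_{\uo}^n\|$ is governed by the \emph{maximum} of the base exponent and the fiber exponent: the fiber-contraction hypothesis (F) forces $\frac1n\log\|\prod_k D_yG\| \le \log c < 0$ uniformly, and one checks (this is presumably Corollary \ref{cor:lyapRegSkewProd} together with Proposition \ref{prop:LEinFiber}) that the off-diagonal block cannot contribute growth exceeding $\lambda_{\rm base}(\xi)$. Hence $\lambda(\xi) = \max\{\lambda_{\rm base}(\xi), \lambda_{\rm fib}(\xi)\}$, where $\lambda_{\rm fib}(\xi) \le \log c < 0$.

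With this identity in hand, (b) is immediate: $\lambda(0) \ge \lambda_{\rm base}(0) > 0$ by assumption (B)(ii). For (a), continuity of $\xi \mapsto \lambda(\xi)$ near $\xi = 0$ reduces to continuity of $\lambda_{\rm base}(\xi)$ at $0$, which is exactly hypothesis (B)(iv); away from $0$ one invokes the footnoted Proposition \ref{prop:LEinFiber} (continuity of the fiber exponent where $\lambda_{\rm base}(\xi) > 0$), and where $\lambda_{\rm base}(\xi) \le 0$ we have $\lambda(\xi) = \lambda_{\rm fib}(\xi) < 0$ already, which is all that is needed since we only ever need to detect the sign change. The genuinely substantive point is (c): I would show $\limsup_{\xi\to\infty}\lambda_{\rm base}(\xi) < 0$, from which $\limsup_{\xi\to\infty}\lambda(\xi) \le \max\{\limsup_{\xi\to\infty}\lambda_{\rm base}(\xi), \log c\} < 0$. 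For this, write $\lambda_{\rm base}(\xi) = \int \left( \int \log|T'(x+\eta)| \, \rho^\xi(\eta)\, d\eta \right) f_\xi(x)\, dx$ using the stationary measure $\mu_\xi$ from (B)(iii) and the Furstenberg-type formula for the random base exponent. As $\xi \to \infty$, the kernel $\rho^\xi$ spreads out: since $\rho^\xi(\eta)\,d\eta$, viewed mod $2$, converges weakly to normalized Lebesgue measure on $[-1,1]$ (the mother kernel has full support $[-1,1]$, so $\frac1\xi\rho(\eta/\xi)$ wrapped around the circle flattens), the inner integral converges, uniformly enough in $x$, to $\frac12\int_{-1}^1 \log|T'(u)|\,du$, which is strictly negative by hypothesis (C). Combined with the fact that $f_\xi$ is a probability density (so the outer average of a quantity converging to a negative constant is eventually negative), this gives $\limsup_{\xi\to\infty}\lambda_{\rm base}(\xi) \le \frac12\int_{-1}^1\log|T'|\,du < 0$.

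The main obstacle I anticipate is making the large-noise limit in (c) rigorous in the presence of the discontinuities and the critical set $\{T'=0\}$: the integrand $\log|T'(x)|$ is only $L^1$ (it has logarithmic singularities at the finitely many critical points and is unbounded below), so the weak convergence $\rho^\xi\,d\eta \to \frac12\Leb$ must be upgraded to convergence of integrals against this non-continuous, non-bounded test function, and one must control the interaction with the possibly $x$-dependent (and a priori not uniformly controlled) stationary densities $f_\xi$. I would handle the test-function issue by a truncation argument — split $\log|T'|$ into a bounded part and a tail near $\{T'=0\}$, using $\log|T'| \in L^1$ and the uniform boundedness of $\rho$ (it is BV, hence bounded, so $\rho^\xi \le \|\rho\|_\infty/\xi$, giving equi-integrability of the pushforwards for large $\xi$) to make the tail contribution uniformly small — and I would handle the $f_\xi$ issue by noting that since $f_\xi$ integrates to $1$ and the inner integral is bounded above by a constant tending to the negative number $\frac12\int\log|T'|$, Fatou/dominated convergence on the outer integral suffices without needing pointwise control of $f_\xi$. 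The block-triangular reduction $\lambda(\xi) = \max\{\lambda_{\rm base}(\xi), \lambda_{\rm fib}(\xi)\}$ itself, while conceptually clean, also requires some care because of the fiber discontinuities — but this is presumably already dispatched by the cited Corollary \ref{cor:lyapRegSkewProd} and Proposition \ref{prop:LEinFiber}, which I would invoke directly.
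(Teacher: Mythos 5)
Your proposal follows essentially the same route as the paper: reduce to the formula $\lambda(\xi) = \max\{\lambda_{\rm base}(\xi), \hat\chi_1(\xi)\}$ (Proposition \ref{prop:LEinFiber}), note $\hat\chi_1(\xi) \le \log c < 0$ always, and then run the continuity-plus-sign-change argument on $\lambda_{\rm base}$ alone. Your observation that we never need continuity of $\lambda$ itself where $\lambda_{\rm base}\le 0$ (because there $\lambda = \hat\chi_1 < 0$ already) is exactly the simplification the paper exploits implicitly. The one place you diverge is in the large-noise limit, and there you have a small but real error in the formula.

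Specifically, you write $\lambda_{\rm base}(\xi) = \int\bigl(\int \log|T'(x+\eta)|\,\rho^\xi(\eta)\,d\eta\bigr) f_\xi(x)\,dx$, putting the noise inside the argument of $T'$. In the paper's convention, the noise is applied \emph{after} $T$, i.e.\ $T_\eta(x) = T(x)+\eta \bmod 2$, so $T_\eta'(x) = T'(x)$ and the correct stationary formula is simply $\lambda_{\rm base}(\xi) = \int \log|T'(x)|\,f_\xi(x)\,dx$ (equation \eqref{eq:statFormBaseLE}) with no noise under the logarithm. A formula like yours would apply to the noise-then-map chain, but then the outer density would have to be $\mathcal L_T f_\xi$ rather than $f_\xi$. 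As written the two sides of your identity are unequal, since $\rho^\xi\,\hat\ast\, f_\xi \ne f_\xi$ in general. The fix is either to adopt the paper's route — use the fixed-point relation $f_\xi = \rho^\xi\,\hat\ast\,\mathcal L_T f_\xi$, bound $\operatorname{Var}(\rho^\xi\,\hat\ast\, g) \le (2\operatorname{Var}\rho/\xi)\|g\|_{L^1}$, conclude $f_\xi \to 1/2$ in BV hence in $L^\infty$, and pair against $\log|T'|\in L^1$ — or to switch consistently to the noise-first chain. Note also that the uniform flattening you invoke ("$\rho^\xi$ wrapped converges to normalized Lebesgue") is itself proved in the paper via the same variation estimate, so the two organizations are not really independent: your truncation argument for the $L^1$ singularity of $\log|T'|$ is needed in either case, and the paper's route handles it more cleanly because $L^\infty$ convergence of $f_\xi$ pairs directly against the $L^1$ function $\log|T'|$ without a truncation step.
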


\begin{Remark}
As showed by numerical experiments for unimodal maps in \cite{Ni} and 
through a rigorous computed aided proof in Lasota-Mackey 
maps \cite{ChiGaNiSa} there may be more than one transition from positive to negative.
Our result proves that there exists at least one such a transition.
\end{Remark}


%
%

\subsubsection*{Application to contracting Lorenz map}
We will apply our results to skew-products of the form 
\[
F(x, y) = (T(x), G(x, y))
\]
where
\[
T(x)=sgn(x)(\rho |x|^s-1), \quad G(x,y)= 2^{-r } sgn(x)y |x|^r+c
\]
which arise naturally as the first return maps for the contracting Lorenz Flow.
We will show (Section \ref{subsec:contractLorenzMap}) that exist values of $\rho$ and $s$ for which the top Lyapunov
exponent transitions from positive to negative as the noise size increases.
As $s$ increases the size of the contracting part of the phase space grows; a plot of the map
$T$ for parameters that present NIO can be found in figure \ref{fig:zeroline}. 

\begin{figure}[h]
	\begin{center}\label{fig:T}
	\includegraphics[width=7cm]{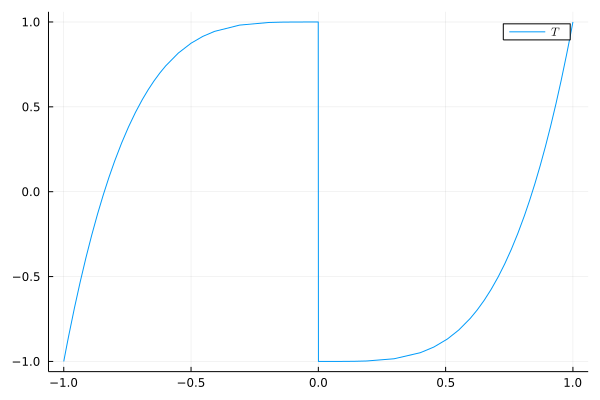}
	\end{center}
	\caption{The map $T$ for $s=4$, $\rho =2$.}
\end{figure}


\section{Proofs}\label{sec:proofs}

\subsection{Existence of Lyapunov exponent}

\subsubsection{Deterministic case ($\xi = 0$)}
We begin by addressing existence of the Lyapunov exponent $\lambda(0)$ 
for $F$ in the absence of noise. 

\begin{Lemma}\label{lem:zeroNoiseLEExists}
The limit
\[
\lambda(0) = \lim_n \frac1n \log \| D_{(x,y)} F^n\|
\]
exists and is constant ($(x,y)$-independent) for Leb-almost every $(x,y) \in [-1,1]^{d+1}$, and moreover, coincides with $\lambda_{\rm base}(0)$. 
\end{Lemma}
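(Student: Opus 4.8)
The plan is to use the block-triangular structure of the derivative cocycle; as will be apparent, only assumptions (F) and (B)(i)--(ii) enter. Write $F^n(x,y) = (T^n x, G_n(x,y))$ for the iterates. First I would check that for Lebesgue-almost every $(x,y)$ the forward orbit $\{F^k(x,y)\}_{k \ge 0}$ avoids the (Lebesgue-null) locus $\mathcal D$ where $F$ fails to be smooth, so that $D_{(x,y)}F^n$ is well defined along such orbits: since $T_*\Leb \ll \Leb$ and each $G(x,\cdot)$ is a local diffeomorphism, a Fubini argument gives $F_*\Leb \ll \Leb$, which propagates to $F^k_*\Leb \ll \Leb$ for all $k$, so each $F^{-k}(\mathcal D)$ and hence $\bigcup_k F^{-k}(\mathcal D)$ is null. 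Along such an orbit, $F$ being a skew product forces $D_{(x,y)}F^n$ to be block lower-triangular,
\[
D_{(x,y)} F^n = \begin{pmatrix} (T^n)'(x) & 0 \\ \partial_x G_n(x,y) & D_y G_n(x,y) \end{pmatrix},
\]
with $D_y G_n(x,y) = \prod_{k=0}^{n-1} D_y G(F^k(x,y))$ and, differentiating $G_n(x,y) = G\bigl(T^{n-1}x, G_{n-1}(x,y)\bigr)$,
\[
\partial_x G_n(x,y) = \partial_x G\bigl(F^{n-1}(x,y)\bigr)\,(T^{n-1})'(x) + D_y G\bigl(F^{n-1}(x,y)\bigr)\, \partial_x G_{n-1}(x,y).
\]

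Then I would estimate the three blocks. By (F), $\|D_y G_n(x,y)\| \le c^n$, contributing exponent $\log c < 0$. By (B)(i) together with $f_0 > 0$ a.e. (so $\mu_0 \sim \Leb$), $\tfrac1n \log|(T^n)'(x)| \to \lambda_{\rm base}(0)$ for Leb-a.e. $x$, as recorded after (B)(i). Iterating the recursion from $\partial_x G_0 \equiv 0$ gives
\[
\|\partial_x G_n(x,y)\| \le \sum_{j=1}^{n} c^{\,n-j}\, \bigl\|\partial_x G\bigl(F^{j-1}(x,y)\bigr)\bigr\|\, |(T^{j-1})'(x)|.
\]
Because $\lambda_{\rm base}(0) > 0 > \log c$ by (B)(ii), the factor $|(T^{j-1})'(x)|$ grows (up to $e^{\varepsilon j}$, any $\varepsilon>0$, a.e. $x$) like $e^{j\lambda_{\rm base}(0)}$ while $c^{n-j}$ decays, so the geometric-type sum is dominated by its terms with $j$ near $n$ and $\limsup_n \tfrac1n \log\|\partial_x G_n(x,y)\| \le \lambda_{\rm base}(0)$ — provided the factors $\|\partial_x G(F^{j-1}(x,y))\|$ grow subexponentially along the orbit. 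Granting that proviso, from $|(T^n)'(x)| \le \|D_{(x,y)}F^n\| \le |(T^n)'(x)| + \|\partial_x G_n(x,y)\| + \|D_y G_n(x,y)\|$ — the lower bound by applying $D_{(x,y)}F^n$ to the base unit vector $e_1$ — one obtains $\lim_n \tfrac1n \log \|D_{(x,y)}F^n\| = \lambda_{\rm base}(0)$ for Leb-a.e. $(x,y)$; constancy of the limit is inherited from constancy of $\lambda_{\rm base}(0)$.

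The one genuinely delicate step is the italicized proviso, and I expect it to be the main obstacle. If $\partial_x G$ is bounded it is immediate. In general — e.g. for the contracting Lorenz map, where $\partial_x G$ is unbounded near $x=0$ — I would show that $\tfrac1n \log^+\bigl\|\partial_x G(F^{n-1}(x,y))\bigr\| \to 0$ along Leb-typical orbits, by invoking integrability of $\log^+\|\partial_x G\|$ (against $\Leb$, hence against the relevant invariant measure) and the Birkhoff ergodic theorem — equivalently, a Borel--Cantelli estimate controlling how often the orbit enters a shrinking neighborhood of the partition boundary. This is exactly the kind of difficulty flagged in the footnote on discontinuities; once it is handled, the lemma is the expected statement that a fiber-contracting skew product over an expanding base inherits its top Lyapunov exponent entirely from the base.
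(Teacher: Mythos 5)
Your proposal follows the same route as the paper's: write $D_{(x,y)}F^n$ in block lower-triangular form, get the lower bound $\liminf_n \tfrac1n\log\|D_{(x,y)}F^n\| \geq \lambda_{\rm base}(0)$ by applying the cocycle to the base unit vector, control $\|D_y G_n\|\le c^n$ by (F), and bound the mixed block by the geometric-type sum $\sum_j c^{\,n-j}\|\nabla G\||(T^{j-1})'|$, closed using $\lambda_{\rm base}(0)>0>\log c$. Where you differ is that you treat $\sup\|\nabla G\|<\infty$ as a nontrivial proviso to justify, whereas the paper simply sets $M=\sup_{(x,y)}\|\nabla G\|$ and uses it without comment. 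You are right that the stated hypothesis (ii), ``piecewise smooth,'' does not by itself force $\nabla G$ to be bounded, so you have correctly isolated an implicit assumption in the paper's proof. However, your illustrative example is factually wrong: for the contracting Lorenz map $G(x,y)=2^{-r}\,\mathrm{sgn}(x)\,y\,|x|^r + c$ with $r>s+3>1$, one has $\partial_x G = 2^{-r}\,r\,y\,|x|^{r-1}$, which \emph{vanishes} as $x\to 0$ rather than blowing up, so $\nabla G$ is bounded in the intended application and no Birkhoff/Borel--Cantelli argument is needed there. (Note also that your Birkhoff fallback for the deterministic $\xi=0$ case would require identifying the relevant $F$-invariant measure on $[-1,1]^{d+1}$, which is more than (B)(i) hands you directly; the Borel--Cantelli route over $\mu_0$ is the safer formulation.) Your preliminary step --- checking that Lebesgue-a.e. orbit avoids the singular set so that $D_{(x,y)}F^n$ is defined, using $T_*\Leb\ll\Leb$ and the local-diffeomorphism property of $G(x,\cdot)$ --- is a correct and worthwhile observation that the paper's proof leaves implicit.
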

\begin{proof}
The limit defining $\lambda_{\rm base}(0)$ exists and is given by
\[
\lambda_{\rm base}(0) = \int_{-1}^1 \log |T'(x)| d \mu_0(x)
\]
by the Birkhoff ergodic theorem and ergodicity of $\mu_0$. 
To lift this to convergence of the full exponent $\lambda(0)$, we start by computing
$D F^n$, setting notation we'll use throughout.
Fixing $x$, let $D_{(x,y)} G$ denote the Jacobian of the mapping $G(x, \cdot) : [-1,1]^d \circlearrowleft$, and fixing $y$, let $\nabla G(x,y)$ denote the vector in $\R^d$ of partial
derivatives of the components of $G$ with respect to $x$. 

In this notation, for the full Jacobian of $F : [-1,1]^{d+1} \circlearrowleft$ we have
\[
D_{(x,y)} F = \begin{pmatrix} T'(x) & 0 \\ \nabla G(x,y) & D_{(x,y)} G \end{pmatrix} \,. 
\]
Writing $D_{(x,y)} G^n = D_{F^{n-1}(x,y)} G \circ \cdots \circ D_{(x,y)} G$, we see that
\[
D_{(x,y)} F^n = \begin{pmatrix} (T^n)'(x) & 0 \\
(*) & D_{(x,y)} G^n \end{pmatrix} \, , 
\]
where
\begin{align*}
(*) & = (T^{n-1})'(x) \nabla G \circ F^{n-1} (x,y) + (T^{n-2})'(x) D_{F^{n-1}(x,y)} G \nabla G(F^{n-2}(x,y)) + \cdots \\
& + (T^{n-i})'(x) D_{F^{n-(i-1)}(x,y)} G^{i-1} \nabla G(F^{n-i}(x,y)) + \cdots \\
& + D_{F(x,y)} G^{n-1} \nabla G(x,y) \,. 
\end{align*}

We now set about estimating $\lambda(0)$. To start, let $x$ be drawn from the Leb. typical set for which $\lim_n \frac1n \log |(T^n)'(x)| = \lambda_{\rm base}(0)$, and let $y \in [-1,1]^d$ be arbitrary. For a vector $v = (a, w) \in \R^{d+1}, a \in \R, w \in \R^d$, we use the lower bound
\[
\| D_{(x,y)} F^n(v) \| \geq |a| |(T^n)'(x)| \,, 
\]
which implies immediately that $\liminf_n \frac1n \log \| D_{(x,y)} F^n\| \geq \lambda_{\rm base}(0)$ for all such $(x,y)$. For the upper bound, fix $\epsilon > 0$ and let $C = C(\epsilon, x) \geq 1$ be sufficiently large so that $|(T^n)'| \leq C e^{n (\lambda_{\rm base}(0) + \epsilon)} $
for all $n$. Let $M = \sup_{(x,y)} \| \nabla G\|$. Then, 
\begin{align*}
\| D_{(x,y)} F^n(1, 0)\| & \leq C M ( e^{(n-1) (\lambda_{\rm base}(0) + \epsilon)} 
+ c e^{(n-2)(\lambda_{\rm base}(0) + \epsilon)}  \\
& + \cdots + c^{n-2} e^{\lambda_{\rm base}(0) + \epsilon} + c^{n-1}) \\
&= C M e^{(n-1) (\lambda_{\rm base}(0) + \epsilon)} (1 + c + \cdots + c^{n-1}) \\
& = C M e^{(n-1) (\lambda_{\rm base}(0) + \epsilon)} \cdot \frac{1-c^n}{1 - c}
\end{align*}
while for $w \in \R^d$, 
\[
\| D_{(x,y)}F^n (0, w) \| \leq c^n \| w \| \, .
\]
We conclude
\[
\limsup_n \frac1n \log \| D_{(x,y)} F^n\| \leq \lambda_{\rm base}(0) + \epsilon \, , 
\]
and the proof is complete on taking $\epsilon \to 0$.

\end{proof}

\subsubsection{Noisy case ($\xi > 0$)}
Before examining Lyapunov exponents, we briefly recall two alternative formulations
of the random dynamics $(F^n_\uo)$. The first is as a Markov chain $(X_n, Y_n)$ on $[-1,1]^{d + 1}$ defined by
\[
(X_n, Y_n) = F_{\omega_n}(X_{n-1}, Y_{n-1}) \, .
\]
Recall that the process $(X_n)$ is a Markov chain in its own right (notation as in assumption (B)(iii)). 

The second alternative formulation is as a ``deterministic skew product''. 
For this, let $\Omega = (\R^{d + 1})^{\otimes \mathbb N}$ be the sequence space
of random samples $\uo  = (\omega_n)_{n \geq 1}$. Let $\Fc$ be the corresponding Borel $\sigma$-algebra and for $\xi > 0$, let $\P_\xi$ denote the probability measure on $(\Omega, \Fc)$ assigning law $\rho_\xi$ to each real coordinate. We define the (ergodic) mpt
$\theta : (\Omega, \Fc, \P_\xi)$ to be the leftward shift, given for $\uo = (\omega_1, \omega_2, \cdots)$ by 
\[
\theta \uo = (\omega_2, \omega_3, \cdots) \,. 
\]
Write $\tau : \Omega \times [-1,1]^{d + 1} \circlearrowleft$ for the 
skew product system
\[
\tau(\uo; x,y) = (\theta \uo; F_{\omega_1}(x,y)) \, , 
\]
so that $\tau^n(\uo; x,y) = (\theta^n \uo; F^n_\uo(x,y))$. 
Similarly, we write $\tau_1$ for the corresponding 
skew product on $\Omega \times [-1,1]$ tracking only the $x$-coordinate; that is, 
\[
\tau_1(\uo; x) = (\theta \uo, T_{\omega_1^1}(x)) \, . 
\]

Turning attention to Lyapunov exponents, we start with the dynamics in the base: 
\begin{Lemma}\label{lem:baseLECty} \ 
\begin{itemize}
\item[(a)] Assume condition (B)(iii). Then, for all $\xi \in (0,\infty)$, the limit
\[
\lambda_{\rm base}(\xi) = \lim_n \frac1n \log |(T^n_\ue)'(x)|
\]
exists and is constant (independent of $x$) for Lebesgue-almost every $x \in [-1,1]$ and
with probability 1. 
\item[(b)] If all of condition (B) holds, then we have that $\xi \mapsto \lambda_{\rm base}(\xi)$ is continuous over $\xi \in [0,\infty)$. 
\end{itemize}
\end{Lemma}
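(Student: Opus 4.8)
The plan for part (a) is to realize $\lambda_{\rm base}(\xi)$ as a Birkhoff average for the skew product $\tau_1$ on $\Omega\times[-1,1]$ defined above. First I would observe that, the driving being i.i.d., the product $\P_\xi\times\mu_\xi$ is $\tau_1$-invariant, where $\mu_\xi$ is the stationary measure supplied by (B)(iii): $\tau_1$-invariance of this product is exactly the stationarity of $\mu_\xi$ under $x\mapsto T_\eta(x)$ with $\eta\sim\rho^\xi$. Uniqueness of $\mu_\xi$ makes it an extreme point of the convex set of stationary measures, hence ergodic for the Markov chain $(X_n)$, and for an i.i.d.\ system ergodicity of the stationary measure is equivalent to ergodicity of $\P_\xi\times\mu_\xi$ for $\tau_1$ (cf.\ \cite{Kifer}). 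Since $T_\eta$ differs from $T$ by an additive constant we have $(T_\eta)'=T'$, so the chain rule gives $\log|(T^n_\ue)'(x)|=\sum_{k=0}^{n-1}\log|T'(X_k)|$ with $X_k=T^k_\ue(x)$; that is, this is the $n$-th Birkhoff sum along $\tau_1$ of $\phi(\uo;x):=\log|T'(x)|$. To apply Birkhoff I need $\phi\in L^1(\P_\xi\times\mu_\xi)$, i.e.\ $\int|\log|T'||\,f_\xi\,dx<\infty$; this follows from $\log|T'|\in L^1(dx)$ (assumption (i)) together with boundedness of $f_\xi$, since the stationary density is obtained by convolving a probability measure with the ($2$-periodized) kernel $\rho^\xi$, so $\|f_\xi\|_\infty\le\|\rho^\xi\|_\infty<\infty$. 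Birkhoff then yields $\frac1n\log|(T^n_\ue)'(x)|\to\int\log|T'|\,d\mu_\xi=:\lambda_{\rm base}(\xi)$ for $(\P_\xi\times\mu_\xi)$-a.e.\ $(\uo;x)$; by Fubini this gives convergence for $\mu_\xi$-a.e.\ $x$ with probability $1$ (and also for a.e.\ $\uo$ and a.e.\ $x$), and since $f_\xi>0$ a.e.\ we may replace $\mu_\xi$-a.e.\ by $\Leb$-a.e.

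For part (b), continuity at $\xi=0$ is precisely assumption (B)(iv), so the content is continuity on $(0,\infty)$. Fix $\xi_0>0$. From $\lambda_{\rm base}(\xi)=\int\log|T'|\,f_\xi\,dx$ it suffices to show (i) $f_\xi\to f_{\xi_0}$ in $L^1(dx)$ as $\xi\to\xi_0$, and (ii) $\sup_\xi\|f_\xi\|_\infty<\infty$ for $\xi$ near $\xi_0$: granting these, split $\int\log|T'|\,(f_\xi-f_{\xi_0})\,dx$ according to whether $|\log|T'(x)||\le M$ or $>M$, bound the first piece by $M\,\|f_\xi-f_{\xi_0}\|_{L^1}$ and the tail by $2(\sup_\xi\|f_\xi\|_\infty)\int_{\{|\log|T'||>M\}}|\log|T'||\,dx$, and let $M\to\infty$ using $\log|T'|\in L^1(dx)$. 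Claim (ii) is immediate since $\|f_\xi\|_\infty\le\|\rho^\xi\|_\infty=\|\rho\|_\infty/\xi$, bounded near $\xi_0$. For (i) the plan is a compactness-and-uniqueness argument: because convolution with a probability measure does not increase total variation, $\operatorname{Var}(f_\xi)=\operatorname{Var}(L_\xi f_\xi)\le\operatorname{Var}(\rho^\xi)$, where $L_\xi h=\rho^\xi*(T_*(h\,dx))$ is the noisy transfer operator on the circle $\R/2\mathbb Z$; hence $\{f_\xi:\xi\text{ near }\xi_0\}$ is bounded in $BV$ and therefore precompact in $L^1$ by Helly's selection theorem. If $f_{\xi_n}\to g$ in $L^1$ along some $\xi_n\to\xi_0$, then from $L_{\xi_n}f_{\xi_n}-L_{\xi_0}g=L_{\xi_n}(f_{\xi_n}-g)+(L_{\xi_n}-L_{\xi_0})g$ the first term tends to $0$ because each $L_\xi$ is an $L^1$-contraction, and the second because $\rho^\xi\to\rho^{\xi_0}$ in $L^1(\R)$ as $\xi\to\xi_0$ (continuity of dilations in $L^1$) while convolution against the fixed probability measure $T_*(g\,dx)$ is $L^1$-continuous; hence $g=L_{\xi_0}g$, so $g$ is a stationary density and $g=f_{\xi_0}$ by the uniqueness in (B)(iii). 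Since every subsequence has a further subsequence converging to $f_{\xi_0}$, we conclude $f_\xi\to f_{\xi_0}$ in $L^1$.

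The routine parts — the chain-rule identity, $L^1$-integrability of $\log|T'|$ against the bounded density, and the $M\to\infty$ truncation — I would not dwell on. I expect the main obstacle to be step (i) of part (b): $L^1$-continuity of the stationary density at positive noise. The delicate point is that (B)(iii) provides exponential mixing only for each individual $\xi$, with no control of the rate in $\xi$, so one cannot simply perturb a spectral gap; it is the $BV$/Helly compactness estimate, combined with uniqueness of the stationary measure to pin down every subsequential limit, that closes the argument. A secondary technical nuisance I would handle carefully but briefly is the $2$-periodization of $\rho^\xi$ implicit in $L_\xi$ (i.e.\ working on $\R/2\mathbb Z$ rather than $\R$): for $\xi<1$ it is vacuous, and for larger $\xi$ it affects the variation and $L^\infty$ bounds by at most a harmless constant.
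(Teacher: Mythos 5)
Your part (a) is essentially the paper's: both invoke the Birkhoff ergodic theorem for the skew product $\tau_1$ over $(\Omega,\P_\xi)$ with invariant measure $\P_\xi\times\mu_\xi$, using the equivalence (from \cite{Kifer}) between ergodicity of $\mu_\xi$ and of the product. For part (b) you take a genuinely different route. The paper uses the \emph{exponential mixing} part of (B)(iii) together with the $L^1\to BV$ smoothing of $\mathcal L_\xi$ to show that $\mathcal L_\xi^{n+1}$ is a contraction in BV on mean-zero functions, and then perturbs this estimate as $\xi'\to\xi$ to deduce that $\xi\mapsto f_\xi$ is continuous in BV; since BV controls $L^\infty$ in one dimension and $\log|T'|\in L^1$, this gives continuity of $\lambda_{\rm base}$. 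You instead use only the \emph{uniqueness} part of (B)(iii): a uniform BV bound $\operatorname{Var}(f_\xi)\lesssim\operatorname{Var}(\rho^\xi)$ near $\xi_0$ gives Helly compactness in $L^1$; passing to the limit in the fixed-point equation $f_\xi=\mathcal L_\xi f_\xi$ (using the $L^1$-contraction property of $\mathcal L_\xi$ and $L^1$-continuity of $\xi\mapsto\rho^\xi$) forces every subsequential limit to be the unique stationary density $f_{\xi_0}$; and a truncation in $\log|T'|$ plus the uniform $L^\infty$ bound $\|f_\xi\|_\infty\lesssim\|\rho^\xi\|_\infty$ upgrades $L^1$-convergence of densities to convergence of the integrals. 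Both proofs are correct. The trade-off: the paper's spectral-gap argument yields the stronger conclusion of BV-continuity of $f_\xi$ and is quantitative, at the price of using the full strength of (B)(iii); your compactness-and-uniqueness argument is ``soft'' (no rate) but needs only uniqueness of the stationary density, not the exponential mixing rate, so it would survive a weakening of hypothesis (B)(iii). Your worry about the $2$-periodization of $\rho^\xi$ is indeed the only place requiring care, and as you note it only costs a bounded constant near a fixed $\xi_0>0$.
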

\begin{proof}
For (a), the proof is to apply the Birkhoff ergodic theorem to the measure-preserving transformation
 $\tau_1 : \Omega \times [-1,1] \circlearrowleft$ with invariant measure $\P_\xi \times \mu_\xi$, using the well-known fact that 
$\mu_\xi$ is an ergodic stationary measure iff $\P_\xi \times \mu_\xi$ is an ergodic invariant
measure for $\tau_1$ \cite{Kifer}.

For (b), (B)(iv) ensures continuity of $\xi \mapsto \lambda_{\rm base}(\xi)$ at $\xi = 0$. For $\xi > 0$, 
the Birkhoff ergodic theorem implies
\begin{align}\label{eq:statFormBaseLE}
\lambda_{\rm base}(\xi) = \int \log |T'(x)| f_\xi(x) dx 
\end{align}
for all $\xi \in [0,\infty)$. In view of the fact that $\log |T'(x)| \in L1(dx)$, it suffices to check
that $\xi \mapsto f_\xi$ varies continuously in the BV norm. The following argument is 
standard, repeated below for the sake of completeness (see, e.g., \cite{Ni}). 

Recall that if $X_0$ is distributed like some $L^1$ density $f$, then 
the law of $X_1$ is given by 
\begin{align}\label{eq:formulaForSD}
\mathcal L_\xi f :=  \rho^\xi \hat{*} \mathcal L_T f \, , 
\end{align}
where $ \mathcal L_T$ is the transfer operator of $T$, given by
\[
\mathcal L_T f (x) = \sum_{y \in T^{-1} x} \frac{f(y)}{|T'(y)|}
\]
for $f : [-1,1] \to \R_{\geq 0}$, and $\rho^\xi \hat{*}$ denotes the periodic convolution defined by
\[
\rho^\xi \hat{*} f (x) = \sum_{i \in \mathbb Z}  \int_{-1}^1 \rho(x+2i-y) f(y)dy.
\]

In particular, $\rho$ is a BV density supported in $[-1,1]$, as in the proof of Lemma \ref{lem:infiniteNoiseBase}
we can check that ${\mathcal L}_\xi$ is bounded in norm as an operator $L^1 \to BV$. We see, then, that
for mean-zero $g \in L^1(dx)$, 
\[
\| \mathcal L^{n + 1}_\xi g \|_{BV} \leq C_\xi e^{- n \gamma_\xi} \| \mathcal L_\xi\|_{L^1 \to BV} \| g \|_{L^1} \, .
\]
Let now $\xi, \xi' > 0$ and fix $n$ so that $C_\xi e^{- n \gamma_\xi} \| \mathcal L_\xi\|_{L^1 \to BV} < 1/4$. We estimate
\begin{align*}
\| f_\xi - f_{\xi'} \|_{BV} & = \| \mathcal L^{n + 1}_\xi f_\xi - \mathcal L^{n + 1}_{\xi'} f_{\xi'} \|_{BV} \\
& \leq \| \mathcal L^{n + 1}_\xi (f_\xi - f_{\xi'}) \|_{BV} + \| (\mathcal L^{n + 1}_\xi - \mathcal L^{n + 1}_{\xi'}) f_{\xi'} \|_{BV}
\end{align*}
The first term is $\leq (1/4) \| f_\xi - f_{\xi'}\|_{L^1} \leq (1/2) \| f_\xi - f_{\xi'}\|_{L^1}$, and so overall
\[
\| f_\xi - f_{\xi'}\|_{BV} \leq 2 \| \rho_\xi \hat{\ast} - \rho_{\xi'} \hat{\ast} \|_{L^1 \to BV} 
\]
using that $\mathcal L_T : L^1 \to L^1$ has norm 1 and that $\| f_{\xi'}\|_{L^1} = 1$. 
It is straightforward to check that the RHS goes to zero as $\xi' \to \xi$, completing the proof. 
\end{proof}

For the random dynamics in the full skew product, we start by checking 
existence and uniqueness of stationary measures $\nu_\xi$ for the Markov chain $(X_n, Y_n)$. 

\begin{Lemma}\label{lem:uniqueStatMeasForSkew}
Let $\xi > 0$ be arbitrary. Assume the fiber contraction condition (F) and that $(X_n)$ has a unique (absolutely continuous and ergodic) stationary measure $\mu_\xi$ (as in assumption (B)(iii)).  Then, the Markov chain $(X_n, Y_n)$ admits a unique, ergodic, absolutely continuous stationary measure $\nu_\xi$.
\end{Lemma}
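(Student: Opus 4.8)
The plan is to exploit the fiber-contraction hypothesis (F) to build the stationary measure $\nu_\xi$ as a skew product over $\mu_\xi$ whose conditional measures on fibers $\{x\} \times [-1,1]^d$ are Dirac masses at a point depending measurably on the full "past" sequence of noises, via the standard one-sided contraction / pullback argument.

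First I would pass to the deterministic skew-product representation $\tau$ on $\Omega \times [-1,1]^{d+1}$ with driving transformation $\theta$ and invariant measure $\P_\xi \times \mu_\xi$ on $\Omega \times [-1,1]$ for the $x$-marginal (using that $\mu_\xi$ ergodic stationary $\iff$ $\P_\xi \times \mu_\xi$ ergodic invariant for $\tau_1$, exactly as in the proof of Lemma \ref{lem:baseLECty}(a)). Given $\uo \in \Omega$ and a base point $x$, consider the backward compositions: for $n \geq 1$, the map $y \mapsto F_{\omega_1} \circ \cdots \circ F_{\omega_n}(\,\cdot\,)$ restricted to the fiber over the appropriate pullback of $x$ is, by (F) and the chain rule, a contraction of $[-1,1]^d$ with Lipschitz constant $\leq c^n$. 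Hence the nested images shrink geometrically and converge to a single point; I would define $\phi(\uo, x) \in [-1,1]^d$ to be this limit, check it is measurable in $(\uo,x)$, and verify the equivariance relation $\phi(\theta\uo, T_{\omega_1^1}(x)) = G_{\omega_1}(x, \phi(\uo,x))$ (here $G_{\omega_1}(x,y) = G(x,y) + \omega_1^{2..d+1}$), which says precisely that the graph of $\phi$ is $\tau$-invariant. Then $\hat\nu_\xi := (\mathrm{Id}, \phi)_*(\P_\xi \times \mu_\xi)$ is $\tau$-invariant, and projecting out the $\Omega$-coordinate gives a stationary measure $\nu_\xi$ for $(X_n,Y_n)$; its $x$-marginal is $\mu_\xi$, which is absolutely continuous. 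For absolute continuity of $\nu_\xi$ itself I would note that, since the noise density $\rho^\xi$ is absolutely continuous in every coordinate (including the fiber coordinates), one step of the chain smooths the fiber direction: $\nu_\xi = \mathcal{L}\nu_\xi$ where $\mathcal L$ involves convolution with $\rho^\xi$ in all $d+1$ coordinates, so $\nu_\xi \ll \Leb_{[-1,1]^{d+1}}$.

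For uniqueness and ergodicity: let $\nu$ be any stationary measure for $(X_n,Y_n)$. Its $x$-marginal is stationary for $(X_n)$, hence equals $\mu_\xi$ by (B)(iii). Lift $\nu$ to a $\tau$-invariant measure $\hat\nu$ on $\Omega \times [-1,1]^{d+1}$ of the form $\P_\xi \times \nu$-fibered appropriately — more precisely, take the unique $\tau$-invariant measure projecting to $\nu$ under the natural extension / one-sided construction. The contraction estimate above shows that for $\P_\xi \times \mu_\xi$-a.e. $(\uo,x)$ the fiber $\{x\}\times[-1,1]^d$ is, under the forward dynamics, asymptotically collapsed onto the single trajectory $\phi$; quantitatively, two initial fiber points starting over the same $(\uo,x)$ have images at distance $\leq c^n \to 0$. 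This forces the disintegration of $\hat\nu$ over the base to be $\delta_{\phi(\uo,x)}$ for a.e. $(\uo,x)$, i.e. $\hat\nu = \hat\nu_\xi$, and hence $\nu = \nu_\xi$. Ergodicity of $\nu_\xi$ follows because $(\mathrm{Id},\phi)_*$ is a measurable isomorphism intertwining $\tau$ on the graph with $\tau_1$ on $\Omega\times[-1,1]$, and $\P_\xi\times\mu_\xi$ is ergodic for $\tau_1$; alternatively, any $(X_n,Y_n)$-invariant set pulls back to a $\tau_1$-invariant set up to null sets.

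The main obstacle is the lack of continuity of $F$: the backward-composition / synchronization argument has to be run carefully so that the geometric contraction estimate $\mathrm{Lip}(G(x,\cdot)) \leq c$ — which holds fiberwise for \emph{every} $x$ by (F), irrespective of the discontinuities in $x$ — is all that is used, and no continuity in $x$ is invoked. In particular one must ensure the limit point $\phi(\uo,x)$ is well defined and measurable despite $F$ being only piecewise smooth; this is fine because the fiber maps $G(x,\cdot)$ are genuine (Lipschitz, locally diffeomorphic) maps of $[-1,1]^d$ for each fixed $x$, and measurability of $\phi$ follows from it being a pointwise (indeed uniform-on-fibers) limit of measurable maps. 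A secondary point requiring care is that one genuinely gets a stationary measure for the two-sided chain from the one-sided skew-product construction; this is the standard passage recorded in \cite{Kifer} and carries over verbatim.
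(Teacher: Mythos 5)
Your proposal is correct, but it takes a genuinely different route from the paper. The paper's existence argument is a Krylov--Bogoliubov scheme on densities: form Ces\`aro averages $h_n = \tfrac1n\sum_{i<n}(P^*)^i h$ of a smooth initial density, use compactness of $BV$ in $L^1$ (the transition operator regularizes into $BV$ since it convolves with $\rho^\xi$ in every coordinate) to extract a limit, and check invariance. For uniqueness the paper avoids any invariant-graph construction: given two distinct stationary measures $\nu,\nu'$ and a separating continuous observable $\varphi$, it applies Birkhoff to $\tau$ over both $\P_\xi\times\nu$ and $\P_\xi\times\nu'$, and then shows the time-average function $\varphi_*(\uo;x,y)$ is actually independent of $y$ because $|F^n_\uo(x,y)-F^n_\uo(x,y')|\leq c^n|y-y'|$; since both $\nu$ and $\nu'$ project to $\mu_\xi$ on the $x$-coordinate, both integrals of $\varphi$ equal $\int\varphi_*(\uo;x,0)\,d\P_\xi\,d\mu_\xi$, a contradiction. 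Ergodicity is then deduced by the usual conditional-measure argument against uniqueness. By contrast, you construct the unique stationary measure explicitly as the law of the random invariant graph $\phi(\uo,x)$ obtained from the pullback/synchronization limit, and identify it afterwards by a disintegration argument. Both arguments use (F) in exactly the same quantitative way (fiber distances contract like $c^n$), and both sidestep continuity of $x\mapsto G(x,\cdot)$, which you correctly flag as the delicate point. What the two approaches buy is slightly different: yours is more geometric, produces the invariant graph structure explicitly, and gives the measurable isomorphism with the base $\tau_1$ (a stronger structural conclusion); but it requires passing to the two-sided natural extension and invoking measurable disintegrations, and one does have to be careful that the ``pullback over $x$'' sets are nested and measurable given non-invertibility of $T$. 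The paper's route is more elementary — no natural extension, no disintegration theorem — at the price of being less constructive: it shows uniqueness of time averages without producing the graph. One small point to make explicit in your write-up: the absolute continuity of $\nu_\xi$ is not inherited from the $x$-marginal being $\mu_\xi$; you correctly derive it instead from $\nu_\xi = P^*\nu_\xi$ and the smoothing-in-all-coordinates property of $P^*$, which is also the mechanism the paper relies on (it states it as ``any stationary measure is automatically absolutely continuous with a $BV$ density'').
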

\begin{proof} 
Any stationary measure for $(X_n, Y_n)$ is automatically absolutely continuous with a $BV$ density. Existence follows from the following mild variation of the typical Krylov-Bogoliubov argument. 
Given a density $h$ on $[-1,1]^{d + 1}$, let $P^* h$ denote the law of $(X_1, Y_1)$ assuming
$(X_0, Y_0)$ is distributed like $h dx dy$. Fixing a smooth initial density $h$, consider the sequence
\[
h_n := \frac1n \sum_{i =0}^{n-1} (P^*)^i h \, , 
\]
noting $(P^*)^i h$ is the law of $(X_i, Y_i)$ assuming $(X_0, Y_0)$ is distributed like $h dx dy$. 
By compactness of $BV$ in $L^1$, there is an $L^1$-convergent
 subsequence $h_{n_k}$ with limit $h \in L^1$. That $h$ is an invariant density now follows 
 from the straightforward bound $\| P^* h - P^* h_{n_k} \|_{L^1} \leq \| h - h_{n_k}\|_{L^1}$. 

In pursuit of a contradiction to uniqueness, 
fix two distinct stationary measures $\nu, \nu'$ for $(X_n, Y_n)$ and 
a continuous observable $\varphi : [-1,1]^{d + 1} \to \R$ for which $\int \varphi d \nu \neq \int \varphi d \nu'$. Note that $\nu, \nu'$ project to the unique stationary measure $\mu_\xi$ for $(X_n)$ on the $x$-coordinate. Moreover, by the Birkhoff ergodic theorem applied to $\tau : \Omega \times [-1,1]^{d + 1} \circlearrowleft$, the limit
\[
\varphi_*(\uo; x,y) = \lim_n \frac1n \sum_0^{n-1} \varphi \circ F^n_\uo(x,y)
\]
exists for $\nu$-a.e. $(x,y)$ and for $\nu'$-a.e. $(x,y)$ (recall that $\nu$, resp. $\nu'$, is stationary iff $\P_\xi \times \nu$, resp. $\P_\xi \times \nu'$, is invariant for $\tau$). 
The limit function $\varphi_*(\uo; x,y)$ satisfies
\[
\int \varphi_*(\uo; x,y) d \P_\xi(\uo) d \nu = \int \varphi d \nu
\]
and the analogous statement for $\nu'$.

We claim that if the limit defining $\varphi_*(\uo; x,y)$ exists for some $\uo, x$ and $y$, 
then it exists and coincides with $\varphi_*(\uo; x, y')$ for all $y' \in [-1,1]^d$. 
To conclude from here, we observe that
\[
\int \varphi d \nu = \int \varphi_*(\uo; x, 0) d \P_\xi(\uo) d \nu(x,y) = \int \varphi_*(\uo; x, 0) d \P_\xi(\uo) d \mu_\xi(x) 
\]
using that $\nu$ projects to $\mu_\xi$ on the $x$-coordinate. We derive an identical expression for $\int \varphi d \nu'$, which leads to a contradiction. 

To check the claim, assume $\varphi_*(\uo; x,y)$ exists and let $y' \in [-1,1]^d$ be arbitrary. 
Observe that for all $n \geq 0$, 
\[
|F^n(x,y) - F^n(x,y')| \leq c^n |y - y'| 
\]
(recall that $c < 1$ is an upper bound on $\Lip(G(x, \cdot))$ viewed as a function on $[-1,1]^d$; see assumption (F)). 
With $\epsilon > 0$ fixed, let $\delta > 0$ be such that $|\varphi(x,y) - \varphi(x,y')| < \epsilon$ if $|y - y'| < \delta$. Fix $N$ large enough so that $c^N |y - y'| < \delta$. Then, 
\[
\left| \frac1n \sum_0^{n-1} \varphi \circ F^n_\uo(x,y) - \frac1n \sum_0^{n-1} \varphi \circ F^n_\uo(x,y') \right| \leq \frac{n - N}{n} \epsilon + \frac{2 N}{n} \| \varphi \|_\infty 
\]
for all $n \gg N$. Taking $n \to \infty$ and $\epsilon \to 0$ establishes the claim. 

At this point, we shown unique existence of a stationary measure $\nu_\xi$ for the Markov chain
$(X_n, Y_n)$, and it remains to check ergodicity. To see this, assume $A$ is an invariant set with 
$\nu_\xi(A) \in (0,1)$, and form the probability measures
\[
\nu_1(K) = \frac{\nu_\xi(A \cap K)}{\nu_\xi(A)} \, , \quad \nu_2(K) = \frac{\nu_\xi(K \cap A^c)}{\nu_\xi(A^c)} 
\]
for $K \subset [-1,1]^{d + 1}$ measurable. It is straightforward to check that invariance of $A$ implies $\nu_1, \nu_2$ are distinct stationary measures
for $(X_n, Y_n)$, contradicting uniqueness. We conclude $\nu_\xi$ is ergodic. 
\end{proof}

\begin{Corollary}\label{cor:lyapRegSkewProd}
Assume the setting of Lemma \ref{lem:uniqueStatMeasForSkew}. For all $\xi > 0$, the limits
\[
\chi_i(\xi) = \lim_n \frac1n \log \sigma_i(D_{(x,y)} F^n_\uo) \, , \quad 1 \leq i \leq d + 1
\]
exist and are constant over $\nu_\xi$-typical $(x, y) \in [-1,1]^{d + 1} $ with probability 1 (some possibly equal to $-\infty$). Moreover, the limit defining
\[
\lambda(\xi) := \chi_1(\xi) 
\]
exists and is constant over $\Leb$-typical $(x,y) \in [-1,1]^{d + 1}$ with probability 1. 
\end{Corollary}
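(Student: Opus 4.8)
The plan is to read off the full Lyapunov spectrum over $\nu_\xi$-typical points from Oseledets' multiplicative ergodic theorem, and then to upgrade the statement about the \emph{top} exponent from ``$\nu_\xi$-typical'' to ``$\Leb$-typical'' using the block-triangular form of $DF$, the mutual absolute continuity of $\mu_\xi$ and $\Leb$ (which holds since $f_\xi>0$ a.e.), and the fiber-contraction argument already used in the proof of Lemma~\ref{lem:uniqueStatMeasForSkew}. For the first assertion I would apply the multiplicative ergodic theorem to the cocycle $(\uo,z)\mapsto D_zF_{\omega_1}=D_zF$ over the ergodic measure-preserving transformation $(\tau,\P_\xi\times\nu_\xi)$ (ergodic by Lemma~\ref{lem:uniqueStatMeasForSkew}); the only point to verify is integrability, and from the form of $D_{(x,y)}F$ and condition (F) one has $\|D_{(x,y)}F\|\le C(1+|T'(x)|)$ with $C$ depending only on $c$ and $\sup\|\nabla G\|$, so that boundedness of $f_\xi$ (a BV density on $[-1,1]$) together with $\log|T'|\in L^1(\Leb)$ yields $\log^+\|D_zF\|\in L^1(\P_\xi\times\nu_\xi)$. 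The theorem then gives, for $(\P_\xi\times\nu_\xi)$-a.e.\ $(\uo,x,y)$, existence of $\chi_i(\xi)=\lim_n\frac1n\log\sigma_i(D_{(x,y)}F^n_\uo)$, and ergodicity makes these constant (some possibly $-\infty$).

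Now set $\lambda(\xi):=\chi_1(\xi)$. For the \emph{lower} bound of the second assertion, use the block-triangular form of $D_{(x,y)}F^n_\uo$ from Lemma~\ref{lem:zeroNoiseLEExists}: its first column gives $\|D_{(x,y)}F^n_\uo\|\ge|(T^n_\ue)'(x)|$, and restriction to the $DF$-invariant subbundle $\{0\}\times\R^d$ gives $\|D_{(x,y)}F^n_\uo\|\ge\|D_{(x,y)}G^n_\uo\|$, while invariance of this subbundle forces $\chi_1(\xi)=\max\{\lambda_{\rm base}(\xi),\chi^{\mathrm{fib}}_1(\xi)\}$, with $\chi^{\mathrm{fib}}_1(\xi)\le\log c$ the top exponent of the fiber cocycle $D_{(x,y)}G^n_\uo$ with respect to $\nu_\xi$ (the off-diagonal block does not affect exponents, just as in the computation in Lemma~\ref{lem:zeroNoiseLEExists}). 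Hence $\liminf_n\frac1n\log\|D_zF^n_\uo\|\ge\lambda(\xi)$ for $\Leb$-a.e.\ $z$ and $\P_\xi$-a.e.\ $\uo$ follows from Lemma~\ref{lem:baseLECty}(a) with $\mu_\xi\sim\Leb$ (giving $\frac1n\log|(T^n_\ue)'(x)|\to\lambda_{\rm base}(\xi)$ $\Leb$-a.e.) together with Proposition~\ref{prop:LEinFiber} (giving $\frac1n\log\|D_zG^n_\uo\|\to\chi^{\mathrm{fib}}_1(\xi)$ $\Leb$-a.e.).

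For the matching \emph{upper} bound I would use Kingman's theorem, $\lambda(\xi)=\inf_{m\ge1}\frac1m\int\log\|D_zF^m_\uo\|\,d(\P_\xi\times\nu_\xi)$: for fixed $m$, the block-subadditivity estimate $\log\|D_zF^n_\uo\|\le\sum_j\Phi_m(\tau^{jm}(\uo,z))+o(n)$ with $\Phi_m(\uo,z)=\log\|D_zF^m\|$ reduces the upper bound to Birkhoff convergence along the orbit of $(x,y)$ of $\Phi_m$, a function finite and continuous off a $\nu_\xi$- and $\Leb$-null set. Exactly as in the proof of Lemma~\ref{lem:uniqueStatMeasForSkew}, such a Birkhoff average has a limit independent of $y$ whenever it exists; combined with ergodicity of $(\tau,\P_\xi\times\nu_\xi)$, the projection of $\nu_\xi$ onto $\mu_\xi$, and $\mu_\xi\sim\Leb$ (the relevant null sets being avoidable since one step of noise renders the law of $F^1_\uo(z)$ absolutely continuous), this limit equals $\int\Phi_m\,d(\P_\xi\times\nu_\xi)$ for $\Leb$-a.e.\ $(x,y)$ and $\P_\xi$-a.e.\ $\uo$. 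Letting $n\to\infty$ and then $m\to\infty$ yields $\limsup_n\frac1n\log\|D_zF^n_\uo\|\le\lambda(\xi)$, completing the argument.

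I expect the main obstacle to be Proposition~\ref{prop:LEinFiber}, i.e.\ $\Leb$-typicality of the fiber exponent. When $\chi^{\mathrm{fib}}_1(\xi)<\log c$ strictly --- which can happen only if $\lambda_{\rm base}(\xi)<0$, so that $\lambda(\xi)=\chi^{\mathrm{fib}}_1(\xi)$ --- the crude bound $\|D_{(x,y)}G^n_\uo-D_{(x,y')}G^n_\uo\|=O(c^n)$ no longer controls the difference of the \emph{logarithms} of the norms, so the synchronization trick that handles Birkhoff averages of continuous functions breaks down and one genuinely needs a multiplicative-ergodic-theorem-type argument along Lebesgue orbits exploiting absolute continuity and uniqueness of $\nu_\xi$. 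For Theorem~\ref{thm:NIO} this difficulty is inessential: on a neighbourhood of $\xi=0$ one has $\lambda(\xi)=\lambda_{\rm base}(\xi)>0$ by the first part of the argument and (B)(ii),(iv), while for large $\xi$ the upper bound already gives $\lambda(\xi)\le\max\{\log c,\lambda_{\rm base}(\xi)\}<0$ once $\lambda_{\rm base}(\xi)<0$.
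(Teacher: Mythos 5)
Your first assertion (existence of the full spectrum $\chi_i(\xi)$ for $\nu_\xi$-typical points via the subadditive ergodic theorem over $(\tau,\P_\xi\times\nu_\xi)$) matches the paper. The genuine gap is exactly the one you flag yourself: upgrading from $\nu_\xi$-typicality to $\Leb$-typicality, and your proposed route does not close it. Concretely: (i) For the lower bound you invoke ``Proposition~\ref{prop:LEinFiber} (giving $\frac1n\log\|D_zG^n_\uo\|\to\hat\chi_1(\xi)$ $\Leb$-a.e.)'' --- but that proposition does not assert any Lebesgue-a.e.\ statement; it only gives the algebraic identity $\lambda(\xi)=\max\{\lambda_{\rm base}(\xi),\hat\chi_1(\xi)\}$ at $\nu_\xi$-typical points, and in the paper it is proved \emph{after} Corollary~\ref{cor:lyapRegSkewProd}, so leaning on it here is circular. (ii) For the upper bound, the synchronization trick from Lemma~\ref{lem:uniqueStatMeasForSkew} requires $\varphi$ to be bounded and uniformly continuous in $y$; the function $\Phi_m(\uo,z)=\log\|D_zF^m_\uo\|$ is neither (it is unbounded near $\{T'=0\}$ and discontinuous on the partition boundaries), so the estimate $\frac{n-N}{n}\epsilon+\frac{2N}{n}\|\varphi\|_\infty$ is unavailable and ``continuous off a null set'' is not a usable substitute. (iii) The parenthetical ``which can happen only if $\lambda_{\rm base}(\xi)<0$'' is not correct: $\hat\chi_1(\xi)<\log c$ is a statement about the fiber cocycle alone and carries no implication for $\lambda_{\rm base}(\xi)$. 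Also note that $\nu_\xi$ need not be equivalent to $\Leb$ on $[-1,1]^{d+1}$ for small $\xi$ (the noise support is a small box), so ``$\mu_\xi\sim\Leb$'' in the base does not automatically transfer to the full chain, which is part of why the upgrade is nontrivial.

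The paper closes this gap by an entirely different mechanism, spelled out in the Appendix (Section~\ref{sec:appConvLE}): it introduces the auxiliary RDS $\hat F_\omega(x,y)=F((x,y)+\omega\ \mathrm{mod}\ 2)$, observes that its transition kernel $\hat P$ is strong Feller, hence (by Seidler's lemma) ultra-Feller, so that $(x,y)\mapsto\hat P((x,y),\cdot)$ is uniformly TV-continuous; then, using fiber contraction to drag any Lebesgue-typical $(x,y)$ exponentially close to some $(x,y')$ in the good set $\hat A$ (which has full $\nu_\xi$-measure and projects onto a full-$\mu_\xi$-measure set of $x$'s), it shows the stopping time $T_{\hat A}$ is a.s.\ finite by a geometric decay estimate $\P(T_{\hat A}>N+i)\le2^{-i}$. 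This probabilistic/regularity argument is not a refinement of your approach but a replacement for it: it sidesteps any need to control Birkhoff averages of the singular observable $\Phi_m$ or to know Lebesgue-a.e.\ convergence of the fiber exponent, and it is the ingredient your proposal is missing.
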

Here, 
$\sigma_i$ refers to the $i$th singular value of a matrix. The values $\{ \chi_i\}$
 are the \emph{Lyapunov exponents} of the derivative cocycle $D_{(x,y)}F^n_\uo$.
We set 
\[
\lambda(\xi) = \chi_1(\xi)\, , 
\]
 the top Lyapunov exponent. 

\begin{proof}
For $\xi > 0$ and at $\nu_\xi$-typical $(x,y)$, everything follows from the subadditive ergodic theorem applied to the sequence of functions 
\[
(\uo, (x, y)) \mapsto \| \wedge^k D_{(x, y)} F^n_\uo\| = \prod_{i = 1}^k \sigma_i(D_{(x,y)} F^n_\uo) \,, \quad n \geq 1
\]
for each fixed $k$, viewed as subadditive over the dynamical system $\tau : \Omega \times [-1,1]^{d + 1} \circlearrowleft$ with invariant measure $\P_\xi \times \nu_\xi$. 

It remains to check that at $i = 1$, this convergence holds for Lebesgue-typical $(x,y)$. So as not to interrupt the flow of ideas, we carry this argument out in the Appendix (Section \ref{sec:appConvLE}).
\end{proof}

Also of interest for us are the Lyapunov exponents in the invariant 
bundle $\{ 0 \} \times \R^d$ tangent to the fibers. 
Equivalently, these are the Lyapunov exponents of the 
 the cocycle 
 \[
B^n_{\uo; (x,y)}:= D_{F^{n-1}_\uo(x,y)} G \circ \cdots \circ D_{F_{\uo} (x,y)} G \circ D_{(x,y)} G
\]
on $\R^d$ (viewed as a cocycle over $\tau : \Omega \times [-1,1]^{d+1} \circlearrowleft$). 
The following is immediate from the subadditive ergodic theorem. 

\begin{Corollary}
Assume the setting of Lemma \ref{lem:uniqueStatMeasForSkew}. For all $\xi > 0$, the limits
\[
\hat \chi_i(\xi) = \lim_n \frac1n \log \sigma_i(D_{(x, y)} G^n_\uo) \, , \quad 1 \leq i \leq d \, .
\]
exist and are constant over $\nu_\xi$-typical $(x, y) \in [-1,1]$ with probability 1 (some possibly equal to $-\infty$). 
\end{Corollary}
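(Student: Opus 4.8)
The proof mirrors that of Corollary~\ref{cor:lyapRegSkewProd}, with the full derivative cocycle $D_{(x,y)}F^n_\uo$ replaced by the fiber cocycle $B^n_{\uo;(x,y)} = D_{(x,y)}G^n_\uo$. Two simplifications occur: we make no claim about $\Leb$-typical $(x,y)$, so no analogue of the Appendix argument used in Corollary~\ref{cor:lyapRegSkewProd} is needed; and the integrability hypothesis for Kingman's subadditive ergodic theorem becomes trivial thanks to fiber contraction. The plan is therefore to apply Kingman's theorem to the exterior-power cocycles of $B^n$ over the ergodic system $\tau:\Omega\times[-1,1]^{d+1}\circlearrowleft$ with invariant measure $\P_\xi\times\nu_\xi$, and then telescope to extract the individual singular values.

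In detail: fix $\xi>0$ and recall from Lemma~\ref{lem:uniqueStatMeasForSkew} that $\nu_\xi$ is the unique, ergodic, absolutely continuous stationary measure for $(X_n,Y_n)$, so that $\P_\xi\times\nu_\xi$ is ergodic for $\tau$. For $1\le k\le d$ set
\[
a^{(k)}_n(\uo;x,y):=\log\bigl\|\wedge^k B^n_{\uo;(x,y)}\bigr\|=\sum_{i=1}^{k}\log\sigma_i\bigl(D_{(x,y)}G^n_\uo\bigr),\qquad n\ge1,
\]
with the convention $a^{(0)}_n\equiv0$. The cocycle identity $B^{m+n}_{\uo;(x,y)}=B^m_{\tau^n(\uo;x,y)}\circ B^n_{\uo;(x,y)}$ together with $\|\wedge^k(AB)\|\le\|\wedge^kA\|\,\|\wedge^kB\|$ makes $(a^{(k)}_n)_{n\ge1}$ subadditive over $\tau$. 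By the fiber-contraction assumption~(F), $\|D_{(x,y)}G\|\le\Lip(G(x,\cdot))\le c<1$ for every $(x,y)$, hence $\|\wedge^k B^1_{\uo;(x,y)}\|\le c^k<1$ and so $a^{(k)}_1\le k\log c<0$; in particular $(a^{(k)}_1)^{+}\equiv0\in L^1(\P_\xi\times\nu_\xi)$, and $a^{(k)}_n$ is measurable by the piecewise smoothness of $G$. Kingman's theorem thus yields that
\[
\Lambda_k(\xi):=\lim_n\frac1n\,a^{(k)}_n(\uo;x,y)
\]
exists in $[-\infty,\infty)$ for $\P_\xi\times\nu_\xi$-a.e.\ $(\uo;x,y)$, is $\tau$-invariant, and hence is $\P_\xi\times\nu_\xi$-a.s.\ equal to a constant by ergodicity of $\P_\xi\times\nu_\xi$. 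Since $a^{(k)}_n\le a^{(k-1)}_n$ (all singular values of $B^n_{\uo;(x,y)}$ are $\le c^n<1$), we have $\Lambda_k(\xi)\le\Lambda_{k-1}(\xi)\le\cdots\le\Lambda_1(\xi)\le0$.

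It remains to pass to individual singular values. Writing $\log\sigma_k(D_{(x,y)}G^n_\uo)=a^{(k)}_n-a^{(k-1)}_n$, let $p=p(\xi)$ be the largest index with $\Lambda_p(\xi)>-\infty$ (with $p=0$ if $\Lambda_1(\xi)=-\infty$). For $k\le p$ both $\tfrac1n a^{(k)}_n$ and $\tfrac1n a^{(k-1)}_n$ converge to finite constants, so $\hat\chi_k(\xi)=\Lambda_k(\xi)-\Lambda_{k-1}(\xi)$ exists and is a.s.\ constant; for $k=p+1$ one has $\tfrac1n a^{(p)}_n\to\Lambda_p(\xi)\in\R$ while $\tfrac1n a^{(p+1)}_n\to-\infty$, so $\tfrac1n\log\sigma_{p+1}\to-\infty$; and for $k\ge p+2$ the bound $\log\sigma_k\le\log\sigma_{p+1}$ forces $\tfrac1n\log\sigma_k\to-\infty$ as well. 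In every case the limit defining $\hat\chi_k(\xi)$ exists $\P_\xi\times\nu_\xi$-a.s.\ and equals an a.s.\ constant, and Fubini's theorem upgrades this to the stated form: for $\nu_\xi$-a.e.\ $(x,y)$ the limits $\hat\chi_1(\xi),\dots,\hat\chi_d(\xi)$ exist and equal the corresponding constants for $\P_\xi$-a.e.\ $\uo$. There is essentially no obstacle here; the only mild bookkeeping concerns the possibility that some $\Lambda_k(\xi)$, and hence some $\hat\chi_k(\xi)$, equals $-\infty$, which is exactly why Kingman's theorem is invoked in the form permitting $-\infty$ limits — legitimate here because (F) makes the relevant positive parts vanish identically.
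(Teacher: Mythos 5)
Your proof is correct and fills in, in detail, exactly the argument the paper leaves implicit (the paper's ``proof'' is the single sentence that the claim is immediate from the subadditive ergodic theorem applied to the fiber cocycle $B^n_{\uo;(x,y)}$ over $(\tau,\P_\xi\times\nu_\xi)$). The application of Kingman's theorem to exterior powers and the telescoping to recover individual singular values, with the observation that fiber contraction (F) makes integrability of the positive part trivial and handles the $-\infty$ bookkeeping, is precisely the intended route.
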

Note that the fiber contraction assumption (F) implies $\hat \chi_1 \leq \log c$ holds
for all $\xi$. 

\subsection{Proof of Theorem \ref{thm:NIO}}

The main step is to affirm the following formula for the top Lyapunov exponent $\lambda(\xi)$ of $F^n_\uo$. 
\begin{Proposition}\label{prop:LEinFiber}
For all $\xi \geq 0$ we have that
\[
\lambda(\xi) = \max\{ \lambda_{\rm base}(\xi), \hat \chi_1(\xi)\} \, . 
\]
\end{Proposition}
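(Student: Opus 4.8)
The plan is to establish the two inequalities $\lambda(\xi) \geq \max\{\lambda_{\rm base}(\xi), \hat\chi_1(\xi)\}$ and $\lambda(\xi) \leq \max\{\lambda_{\rm base}(\xi), \hat\chi_1(\xi)\}$ separately, working at a fixed $\xi > 0$ (the case $\xi = 0$ being covered by Lemma \ref{lem:zeroNoiseLEExists}, which already gives $\lambda(0) = \lambda_{\rm base}(0) \geq \hat\chi_1(0)$ since $\hat\chi_1 \leq \log c < 0$). For the lower bound, I exploit the block-triangular form of $D_{(x,y)}F^n_\uo$ recorded in Lemma \ref{lem:zeroNoiseLEExists}: applying the cocycle to a vector $(1,0)$ shows $\|D_{(x,y)}F^n_\uo\| \geq |(T^n_\ue)'(x)|$, which after taking $\frac1n\log$ and $n \to \infty$ gives $\lambda(\xi) \geq \lambda_{\rm base}(\xi)$ for Leb-typical $(x,y)$ and a.s.; applying it to vectors $(0,w)$ and using that the lower-right block is exactly the fiber cocycle $B^n_{\uo;(x,y)} = D_{(x,y)}G^n_\uo$ gives $\lambda(\xi) \geq \hat\chi_1(\xi)$ analogously. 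Since $\lambda(\xi)$ is a.s. constant on a Leb-typical set (Corollary \ref{cor:lyapRegSkewProd}), this yields the lower bound.

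For the upper bound, the key structural fact is again the block-triangular form: $D_{(x,y)}F^n_\uo$ has diagonal blocks $(T^n_\ue)'(x)$ (scalar) and $D_{(x,y)}G^n_\uo$ (the $d\times d$ fiber cocycle), with an off-diagonal term $(*)$ built from products of intermediate $DG$ factors — each such factor has operator norm $\leq c < 1$ by (F) — against the $\nabla G$ vectors, which are uniformly bounded by $M := \sup \|\nabla G\|$, and against partial products $(T^k_\ue)'$. Concretely, I would bound $\|(*)\| \leq M \sum_{j=1}^{n} |(T^{n-j}_\ue)'(x)| \cdot c^{\,j-1}$ (reading off the displayed expansion for $(*)$ with $T$ replaced by $T_\ue$ and using $\|D G^{j-1}\| \leq c^{j-1}$). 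Given $\epsilon > 0$, choose $C = C(\epsilon,\uo,x) \geq 1$ with $|(T^k_\ue)'(x)| \leq C e^{k(\lambda_{\rm base}(\xi)+\epsilon)}$ for all $k$ (valid on a Leb-typical full-measure set by Lemma \ref{lem:baseLECty}(a)); then $\|(*)\|$ is bounded by $CM e^{n(\lambda_{\rm base}(\xi)+\epsilon)}$ times a geometric sum in $c\, e^{-(\lambda_{\rm base}(\xi)+\epsilon)}$, hence by a constant times $n \cdot e^{n(\max\{\lambda_{\rm base}(\xi),0\}+\epsilon)}$ — and since $\lambda_{\rm base}(\xi) > 0$ is where we claim continuity (see the footnote in the excerpt), in our regime this is $\leq C' e^{n(\lambda_{\rm base}(\xi)+\epsilon)}$. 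Combining the three blocks, $\|D_{(x,y)}F^n_\uo\| \leq |(T^n_\ue)'(x)| + \|(*)\| + \|D_{(x,y)}G^n_\uo\| \leq C'' e^{n(\lambda_{\rm base}(\xi)+\epsilon)} + \|D_{(x,y)}G^n_\uo\|$, so $\limsup_n \frac1n \log \|D_{(x,y)}F^n_\uo\| \leq \max\{\lambda_{\rm base}(\xi)+\epsilon, \hat\chi_1(\xi)\}$, and letting $\epsilon \to 0$ finishes the upper bound on the Leb-typical set; constancy again propagates it.

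The main obstacle I anticipate is the interaction between the off-diagonal term $(*)$ and the fiber cocycle $D G^n_\uo$ when $\hat\chi_1(\xi)$ is the larger exponent — one must be careful that $\|(*)\|$ does not secretly carry a contribution larger than $e^{n(\hat\chi_1(\xi)+\epsilon)}$; in fact the crude bound $\|D G^{j-1}\| \le c^{j-1}$ gives $\|(*)\| \lesssim e^{n(\lambda_{\rm base}(\xi)+\epsilon)}$ with no dependence on $\hat\chi_1$ at all, so this is fine, but it requires that one resist the temptation to use the sharper $\|D G^{j-1}\| \approx e^{(j-1)\hat\chi_1}$. A secondary technical point is that $\hat\chi_1(\xi)$ and $\lambda_{\rm base}(\xi)$ are a.s.-constant only on sets of full $\nu_\xi$-measure (resp.\ $\mu_\xi$-measure) in the base, while we want the conclusion Leb-a.e.; here I would lean on Corollary \ref{cor:lyapRegSkewProd}, which already asserts $\lambda(\xi)$ is Leb-a.s.\ constant, together with the fact that $\mu_\xi$ and $\nu_\xi$ are absolutely continuous with a.e.-positive base density, so the pointwise bounds above, valid $\mu_\xi$- and $\nu_\xi$-a.e., transfer to a Leb-positive-measure set and hence pin down the constant value of $\lambda(\xi)$.
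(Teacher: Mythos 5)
Your overall plan—lower bound via the standard vectors $(1,0)$ and $(0,w)$ plugged into the block-triangular cocycle, upper bound by direct estimation of the three blocks—is genuinely different from the paper's proof, which instead applies Corollary~\ref{cor:subspaceGrowth} to the invariant flag $V_1\subsetneq\cdots\subsetneq V_d=\{0\}\times\R^d$ together with the determinant identity $\det D_{(x,y)}F^n_\uo = (T^n_\ue)'(x)\,\det B^n_{\uo;(x,y)}$ to conclude that the \emph{multiset} of exponents of $DF^n_\uo$ is exactly $\{\lambda_{\rm base}\}\cup\{\hat\chi_1,\dots,\hat\chi_d\}$, and then reads off the max. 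Your lower bound is correct, and your observation about transferring a.s.-constancy across $\nu_\xi$, $\mu_\xi$ and $\Leb$ via absolute continuity and positive densities is handled in the paper the same way.

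However, the upper bound has a real gap. The estimate you write down is
\[
\|(*)\| \;\lesssim\; \sum_{j=1}^n e^{(n-j)(\lambda_{\rm base}+\epsilon)}\, c^{\,j-1}
\;\lesssim\; n\,\max\bigl\{\,e^{n(\lambda_{\rm base}+\epsilon)},\ c^n\,\bigr\},
\]
and the threshold separating the two regimes of the geometric sum is $\lambda_{\rm base}+\epsilon = \log c$, not $\lambda_{\rm base}=0$. When $\lambda_{\rm base}(\xi)+\epsilon > \log c$ the sum is $O(e^{n(\lambda_{\rm base}+\epsilon)})$ and everything goes through. But when $\lambda_{\rm base}(\xi) < \log c$, the crude inequality $\|DG^{j-1}\|\le c^{j-1}$ forces $\|(*)\|\lesssim c^n$, and the resulting upper bound on $\limsup_n\frac1n\log\|DF^n_\uo\|$ is only $\max\{\lambda_{\rm base},\log c\}$. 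Since $\hat\chi_1\le\log c$ with strict inequality possible, this does not give $\max\{\lambda_{\rm base},\hat\chi_1\}$ — precisely the scenario you flag as a concern, but your resolution (\emph{``the crude bound gives $\|(*)\|\lesssim e^{n(\lambda_{\rm base}+\epsilon)}$ with no dependence on $\hat\chi_1$, so this is fine''}) is not correct once $\lambda_{\rm base}<\log c$. To recover the sharp answer one would have to replace $c^{j-1}$ by an asymptotic bound $\approx e^{(j-1)\hat\chi_1}$ on $\|D_{F^{n-(j-1)}_\uo(x,y)}G^{j-1}\|$; this is delicate because the initial point of that block depends on $n$ and $j$, and controlling the tempered constants uniformly requires essentially the MET machinery that the paper invokes anyway. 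In short, your argument proves the Proposition only for $\lambda_{\rm base}(\xi)>\log c$, not for all $\xi\ge 0$ as stated. (It does suffice for Theorem~\ref{thm:NIO}, since $\log c<0$ and the theorem only needs $\lambda<0$ in the large-noise regime — but you should either restrict the claim or switch to the Lyapunov-basis argument.)

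One smaller point: the intermediate bound you state, ``a constant times $n\cdot e^{n(\max\{\lambda_{\rm base}(\xi),0\}+\epsilon)}$,'' is not the right comparison; the correct exponent in the max is $\log c$, not $0$.
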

\begin{proof}
Below, we suppress $\xi$-dependence, writing $\chi_i = \chi_i(\xi), \lambda_{\rm base} = \lambda_{\rm base}(\xi)$, etc. 

Let $V = \{ 0 \} \times \R^d$ denote the linear span of the last $d$ coordinates in $\R^{d + 1}$. By the skew product structure of $D F$, we have for $v \in V$ that
\[
D_{(x,y)} F_\uo^n (0, v) = B^n_{\uo; (x,y)} (v) \, .
\]
In particular, 
\[
\det(D_{(x,y)} F^n_\uo|_V) = \det(B^n_{\uo; (x,y)}) \,. 
\]
By Corollary \ref{cor:subspaceGrowth} applied to $D F^n_\uo$ and $B^n_{\uo; (x,y)}$, we see that there exist indices $1 \leq i_1 < \cdots < i_{d} \leq d + 1$
such that
\[
\chi_{i_1} + \cdots + \chi_{i_d} = \hat \chi_1 + \cdots + \hat \chi_d \,. 
\]
To obtain another relation, observe by the block diagonal structure of $D F^n_\uo$
that 
\[
\det(D_{(x,y)} F^n_\uo) = (T^n_\ue)'(x) \det(B^n_{\uo; (x,y)})
\]
where $\ue = (\eta_1, \eta_2, \cdots)$ and $\eta_i := \omega_i^1$. We obtain immediately
that
\[
\chi_1 + \cdots + \chi_{d + 1} = \lambda_{\rm base} + \hat \chi_1 + \cdots + \hat \chi_d \,, 
\]
and conclude
\[
\lambda_{\rm base} = \chi_{i_*}
\]
where $i_*$ is the unique element of $\{ 1 , \cdots, d+1\} \setminus \{ i_1, \cdots, i_d\}$. 

For the remaining exponents $\chi_{i_j}, 1 \leq j \leq d$, let $e_1, \cdots, e_d$ be any 
basis for $V$ and let $V_j = \operatorname{Span}\{ e_1, \cdots, e_j\}$, noting $V_1 \subsetneq V_2 \subsetneq \cdots \subsetneq V_d = V$. Iteratively applying Corollary \ref{cor:subspaceGrowth} we see that there is a permutation $\sigma$ of $\{ 1, \cdots, d\}$ such that
\[
\lim_n \frac1n \log | \det(D_{(x,y)} F^n_\uo|_{V_j})| = \chi_{i_{\sigma(1)}} + \cdots + \chi_{i_{\sigma(j)}}
\]
for each $1 \leq j \leq d$. 
Similarly, there is another permutation $\hat \sigma$ of $\{ 1, \cdots, d\}$ such that
\[
\lim_n \frac1n \log |  \det(B^n_{\uo; (x,y)}|_{V_j}) | = \hat \chi_{\hat \sigma(1)} + \cdots + \hat \chi_{\hat \sigma(j)} \,. 
\]
Since $\det(D_{(x,y)} F^n_\uo|_{V_j}) = \det(B^n_{\uo; (x,y)}|_{V_j})$ for all $j$, we conclude
\[
\chi_{i_{\sigma(j)}} = \hat \chi_{\hat \sigma(j)} \quad \text{ for all } 1 \leq j \leq d \,. 
\]
In summary, we have shown
\begin{itemize}
	\item $\lambda_{\rm base} = \chi_{i_*}$ for some $i_* \in \{ 1, \cdots, d + 1\}$. 
	\item The list of remaining exponents $\{ \chi_{i} : 1 \leq i \leq d + 1, i \neq i_*\}$
	coincides with the list $\{ \hat \chi_i : 1 \leq i \leq d\}$, counting multiplicities. 
\end{itemize}
We conclude that $\max\{ \chi_i \} = \max\{\lambda_{\rm base}, \hat \chi_1\}$ as desired. 
\end{proof}

We will also require the following on the behavior of $\lambda_{\rm base}(\xi)$ in the infinite-noise limit $\xi \to \infty$. 
\begin{Lemma}\label{lem:infiniteNoiseBase}
Under assumption (B)(iii), we have $\lim_{\xi \to \infty} \lambda_{\rm base}(\xi) = \frac{1}{2}\int_{-1}^1 \log |T'(x)| d x$. 
If in addition (C) holds, then $\lim_{\xi \to \infty} \lambda_{\rm base}(\xi) < 0$ 
\end{Lemma}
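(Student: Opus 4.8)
The plan is to compute the $\xi \to \infty$ limit of the stationary density $f_\xi$ and show it converges to the uniform density $\frac12$ on $[-1,1]$, from which the stated formula follows via the Birkhoff formula \eqref{eq:statFormBaseLE}. First I would use the characterization of $f_\xi$ as the fixed point of $\mathcal L_\xi = \rho^\xi \hat* \mathcal L_T$. The key observation is that as $\xi \to \infty$, the rescaled kernel $\rho^\xi$ spreads out, so the periodic convolution $\rho^\xi \hat*$ tends (in an appropriate operator sense on densities) to the operator sending any probability density to the uniform density $\tfrac12 \mathbf 1_{[-1,1]}$: indeed for fixed $x$, $\rho^\xi \hat* g(x) = \sum_{i \in \mathbb Z} \int_{-1}^1 \rho^\xi(x + 2i - y) g(y)\, dy = \int_{-1}^1 \left(\sum_{i} \rho^\xi(x + 2i - y)\right) g(y)\, dy$, and the periodization $\sum_i \rho^\xi(z + 2i) \to \tfrac12$ uniformly in $z$ as $\xi \to \infty$ (since $\rho^\xi$ is a probability density whose mass escapes to scale $\xi$, each shifted copy contributes $\approx \tfrac1{2}\cdot\frac{2}{?}$; more carefully, $\sum_i \rho^\xi(z+2i) = \sum_i \tfrac1\xi \rho(\tfrac{z+2i}\xi)$ is a Riemann-type sum for $\tfrac12 \int_{\mathbb R}\rho = \tfrac12$ with spacing $\tfrac2\xi \to 0$, uniformly in $z$ using that $\rho \in BV$). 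Since $\mathcal L_T$ preserves the mass of a density and $\|\mathcal L_T\|_{L^1 \to L^1} = 1$, we get $\|\mathcal L_\xi g - \tfrac12 \mathbf 1\|_{L^1} \to 0$ uniformly over probability densities $g$.

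From here, applying $\mathcal L_\xi$ to $f_\xi$ itself gives $f_\xi = \mathcal L_\xi f_\xi$, and the uniform estimate above yields $\|f_\xi - \tfrac12 \mathbf 1_{[-1,1]}\|_{L^1} \to 0$ as $\xi \to \infty$. (Alternatively, one controls $\|f_\xi\|_{BV}$ uniformly for large $\xi$ — which the excerpt already invokes, ``$\mathcal L_\xi$ is bounded in norm as an operator $L^1 \to BV$'' — and extracts a convergent subsequence; but the direct estimate is cleaner since the limit is identified outright.) Then, since $\log|T'| \in L^1(dx)$ by assumption (B)(i), and $f_\xi \to \tfrac12$ in $L^1$ with $\|f_\xi\|_\infty$ (or $\|f_\xi\|_{BV}$) bounded uniformly for $\xi$ large, I would pass to the limit in $\lambda_{\rm base}(\xi) = \int_{-1}^1 \log|T'(x)| f_\xi(x)\, dx$ to obtain $\lim_{\xi\to\infty}\lambda_{\rm base}(\xi) = \tfrac12 \int_{-1}^1 \log|T'(x)|\, dx$. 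The second assertion is then immediate: under (C), $\int_{-1}^1 \log|T'| \, dx < 0$, so $\lim_{\xi\to\infty}\lambda_{\rm base}(\xi) < 0$.

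The one subtlety in passing to the limit in the integral is that $\log|T'|$ is merely $L^1$, not bounded (it has $-\infty$ singularities at the finitely many critical points $\{T' = 0\}$ and possibly $+\infty$ behavior near discontinuities of $T$), so convergence $f_\xi \to \tfrac12$ in $L^1$ alone does not justify interchanging limit and integral against $\log|T'|$. This is where the uniform $BV$ (hence $L^\infty$) bound on $f_\xi$ for large $\xi$ does the work: writing $\int \log|T'|(f_\xi - \tfrac12) = \int_{\{|\log|T'|| \le M\}} + \int_{\{|\log|T'|| > M\}}$, the first integral is bounded by $M \|f_\xi - \tfrac12\|_{L^1} \to 0$, while the second is bounded by $(\sup_\xi \|f_\xi\|_\infty + \tfrac12)\int_{\{|\log|T'||>M\}} |\log|T'||\, dx$, which is small for $M$ large by absolute continuity of the integral of the $L^1$ function $\log|T'|$.

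The main obstacle I anticipate is establishing the uniform-in-$z$ convergence of the periodization $\sum_{i\in\mathbb Z}\rho^\xi(z + 2i) \to \tfrac12$ with enough uniformity to conclude $\mathcal L_\xi g \to \tfrac12 \mathbf 1$ uniformly over all probability densities $g$ — this requires the $BV$ regularity of $\rho$ (to control the Riemann-sum error) and some care that the tails of $\rho$ do not cause trouble, but it is a concrete and standard estimate. Everything downstream — identifying the limit density, the uniform $BV$ bound (already asserted in the excerpt), and the dominated-type passage to the limit in the Birkhoff integral — is routine.
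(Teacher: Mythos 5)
Your proposal is correct and in essence takes the same route as the paper: both arguments reduce to showing $f_\xi \to \tfrac12$ as $\xi \to \infty$ with a rate of $O(\xi^{-1})$ controlled by $\Var(\rho)$, then substitute into the Birkhoff formula \eqref{eq:statFormBaseLE}. The paper does this by estimating $\Var(\rho^\xi \hat\ast f) \le \tfrac{2\Var(\rho)}{\xi}\|f\|_{L^1}$ directly and then noting that $\Var(f_\xi)\to 0$ together with $\int f_\xi = 1$ forces $f_\xi\to\tfrac12$; you instead estimate the periodized kernel $K_\xi(z) = \sum_i \rho^\xi(z+2i)$ against $\tfrac12$ via a Riemann-sum error bound, again governed by $\Var(\rho)/\xi$. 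These are two ways of packaging the same estimate. One small point worth noting: your periodization bound $\|K_\xi - \tfrac12\|_\infty = O(\Var(\rho)/\xi)$ actually yields $\|f_\xi - \tfrac12\|_{L^\infty} \to 0$ (since $f_\xi = \rho^\xi \hat\ast \mathcal L_T f_\xi$ and $\mathcal L_T f_\xi$ is a probability density), not merely $L^1$ convergence, so the truncation argument you introduce to handle the $L^1$-only function $\log|T'|$ is superfluous — uniform convergence of $f_\xi$ lets you pass to the limit in the integral directly, which is exactly how the paper closes. The truncation argument you give is nonetheless correct and would be the right tool if one only had $L^1$ convergence plus a uniform $L^\infty$ bound, so this is a matter of economy rather than correctness.
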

\begin{proof}
Recall (equation \eqref{eq:formulaForSD}) that the density $f_\xi$ of the stationary measure $\mu_\xi$ for the Markov chain $(X_n)$ on $[-1,1]$
satisfies
\begin{align*}
f_\xi = \mathcal L_\xi f_\xi = \rho^\xi \hat{*} \mathcal L_T f_\xi
\end{align*}
\newcommand{\Var}{\operatorname{Var}}

As we will argue below, it suffices to prove that for all $f \in L^1([-1,1])$, we have
\begin{align}\label{eq:conv12}
\Var(\rho^\xi \hat{*} f) \to 0 \quad \text{ in }BV \,. 
\end{align}
where $\Var$ denotes the first variation of a function on $[-1,1]$. 
Assume that $\rho$ is $C^1$, and 
recall that for $\phi \in C^1([-1,1])$, 
$\Var(\phi) = \int_{-1}^1 |\phi'(x)| dx$; the result when $\rho$ is not $C^1$ follows from 
a straightforward approximation argument. 

We now estimate 
\begin{align*}
\Var(\rho^\xi \hat{*} f) & = \int_{-1}^1 |(\rho^\xi \hat{*} f)'| dx = \int_{-1}^1 \left| \sum_{i \in \mathbb Z} \int_{-1}^1 (\rho^\xi)'(x + 2 i - y) f(y) dy\right| dx \\
&\leq  \sum_{i\in \mathbb{Z}}\int_{-1}^1 \left(\int_{-1}^1 |(\rho^\xi)'(x+2i-y)| dx\right) |f(y)|dy \\
&\leq \sum_{i\in \mathbb{Z}}\int_{-1}^1 \Var_{[-1+2i-y, 1+2i-y]}(\rho^\xi)|f(y)| dy \\
&\leq  \sum_{i\in \mathbb{Z}}\int_{-1}^1 \Var_{[-2+2i, 2+2i]}(\rho^\xi)|f(y)| dy  
\end{align*}
having used in the last line that $\Var_I (\phi) \leq \Var_J(\phi)$ if $I, J$ are compact intervals such that $I \subset J$ and $\phi : J \to \R$ is BV. In total, 
\[
\Var(\rho^\xi\hat{\ast}f) \leq \sum_i \Var_{[-2+2i, 2+2i]}(\rho^\xi) \|f\|_{L^1} = 2\Var_{[-\xi,\xi]}(\rho^\xi)\|f\|_{L^1}
\]
recalling that $\rho$ is supported on $[-1,1]$, hence $\rho^\xi$ is supported on $[-\xi, \xi]$. 

From here, we note that
\[
\Var_{[-\xi,\xi]} (\rho^\xi) = \int_{-\xi}^\xi (\rho^\xi)'(x) dx = \frac{1}{\xi^2} \int_{-\xi}^\xi \rho'(x / \xi) dx = \frac{1}{\xi} \Var_{[-1,1]}(\rho) \, , 
\]
and conclude 
\[
\Var(\rho^\xi \hat{*} f) \leq \frac{2 \Var_{[-1,1]}(\rho)}{\xi} \|f\|_{L^1} \to 0  \quad \text{ as } \xi \to \infty \, . 
\]

As $\| f_\xi\|_{L^1} \equiv 1$ for all $\xi$, equation \eqref{eq:formulaForSD} now implies
 $f_\xi$ converges to $1/2$ in $BV$. In view of the fact that $\log |T'| \in L^1(m)$ and equation \eqref{eq:statFormBaseLE}, it follows from $\lambda_{\rm base}(\xi) \to \frac{1}{2}\int_{-1}^1 \log|T'(x)| dx$
 as $\xi \to \infty$, which is $< 0$ by assumption (C). 
 \end{proof}
 




We are now in position to close the proof. 

\begin{proof}[Proof of Theorem \ref{thm:NIO}]
The three ingredients we use are as follows: 
\begin{itemize}
    	\item[(a)] $\xi \mapsto \lambda_{\rm base}(\xi)$ is continuous over $\xi \in [0,\infty)$ (Lemma \ref{lem:baseLECty}(b)); 
	\item[(b)] $\lambda_{\rm base}(0) > 0$ (Assumption (B)(ii)) and $\lim_{\xi \to \infty} 
	\lambda_{\rm base}(\xi) < 0$ (Lemma \ref{lem:infiniteNoiseBase}); 
	\item[(c)] $\lambda(\xi) = \max\{ \lambda_{\rm base}(\xi), \hat \chi_1(\xi)\}$ (Proposition \ref{prop:LEinFiber}); and 
	\item[(d)] $\hat \chi_1(\xi) \leq \log c < 0$ for all $\xi$ (Assumption (F)).
\end{itemize}
It is immediate from (a) and (b) that $\exists 0 < \xi_+  < \xi_-$ such that $\lambda_{\rm base}(\xi) > 0$ for $\xi \in [0,\xi_+)$ and $\lambda_{\rm base}(\xi) < 0$ for all $\xi \in (\xi_-, \infty)$. 
From (c) and (d), it follows that $\lambda(\xi) > 0$ for $\xi \in [0,\xi_+)$ 
and $\lambda(\xi) < 0$ for $\xi \in (\xi_-, \infty)$. This completes the proof. 
\end{proof}

\subsection{Application to contracting Lorenz map}\label{subsec:contractLorenzMap}

For a complete construction of the contracting Lorenz Flow we refer to \cite{ArPa, GaNiPa}.
For the sake of this paper, what  matters is that the first return map to the Poincaré section 
$F:[-1,1]^2\to [-1,1]^2$ has the form
\[
F(x, y) = (T(x), G(x, y))
\]
where
\[
T(x)=sgn(x)(\rho |x|^s-1), \quad G(x,y)=2^{-r} sgn(x)y |x|^r+c
\]
for some $c$ such that the sets $F([-1,0]\times [-1, 1])$ and $F([0,1]\times [-1, 1])$
do not overlap and $0<\rho\leq 2$, with $r>s+3$.

The map $T$ satisfies the following properties:
\begin{itemize}
\item the order of $T'$ at $0$ is $s-1>0$,
\item $T$ has a discontinuity in $0$, $T(0^+)=-1$, $T(0^-)=1$,
\item $T'(x)>0$ for $x\neq 0$,
\item $\max |T'(x)|$ is attained at $-1$ and $1$,
\item $T$ has negative Schwarzian derivative.
\end{itemize}

The parameter $\rho$ can be chosen in such a way that the points
$1$ and $-1$ are preperiodic repelling (which is a Misiurewicz-type condition \cite{Mis}); 
this condition is always satisfied when $\rho = 2$ for any value of $s$. That both of $1,-1$ are preperiodic
repelling will be assumed in throughout in the following discussion. 

Under these hypotheses and parameter choices it was proved in \cite{Ro} that the flow 
associated to $F$ admits an attractor $\Lambda_0$.
Indeed, a stronger result is proved, i.e., that $2$ is the density point of a positive 
Lebesgue measure set of parameters (called the Rovella parameters) 
for which an attractor exists, c.f. Remark \ref{rmk:RovellaParams} below. 
In \cite{MeEx} it is proved that under the above conditions, the map 
$T$ has a unique a.c.i.m. and positive Lyapunov 
exponent, confirming conditions (B)(i) and (B)(ii). 
Condition (F) is evident, and so below we carry out the remaining work
of checking (B)(iii), (B)(iv) and (C). 

\subsubsection{Condition (B)(iii)}
It was checked directly in \cite{Me}
that $\exists \xi_0 > 0$ such that Condition (B)(iii) holds for all 
$\xi \in (0,\xi_0]$. It is straightforward to check that this 
can be promoted to all $\xi > 0$ by applying, e.g., the arguments
of Section 4 in \cite{Ni}, where general conditions were given for deducing $L^1$ contraction
of  $\mathcal L_{\xi'}$ when $\mathcal L_\xi$ is an $L^1$ contraction for some $0 < \xi < \xi'$. 

\subsubsection{Condition (B)(iv)}
In view of equation \eqref{eq:statFormBaseLE} 
relating $\lambda_{\rm base}(\xi)$ to the expectation of $\log |T'|$ with respect to $f_\xi d x$, 
it suffices to have $f_\xi \to f_0$ in $BV$. 
In \cite{Me}, it was shown that $f_\xi \to f_0$ in $L^1$, better known as \emph{strong stochastic
stability} of the map $T$-- however, this is not quite enough for our purposes. 
Fortunately, the tower construction given in \cite{Me, MePrior} provides enough information, as we
show below. 

The tower construction consists of an extension $\hat T : \hat I \circlearrowleft$
of the dynamic $T$, where $\hat I \subset \mathbb Z \times [-1,1]$
 is the union of a countable collection of sets of the form $E_k := \{ k \} \times B_k, k \in \mathbb Z$, 
 and $\{ B_k\}$ is a proscribed partition mod 0 of $[-1,1]$. 
Once $\hat T : \hat I \circlearrowleft$ is specified, it satisfies the entwining relation $ \pi \circ \hat T = T \circ \pi$, 
where $\pi : \hat I \to [-1,1]$ is the projection taking $(k, x) \in \{ k \} \times B_k$ to its corresponding point $x \in [-1,1]$. 

In \cite{Me}, a random dynamical system comprised of compositions of the form 
$\hat T_\eta : \hat I \circlearrowleft, \eta \in \R$ was constructed analogously to the 
construction of the random perturbations $T_\eta$ of the base map $T$. For each $\eta$, these
random systems entwine with $T_\eta$, i.e., $\pi \circ \hat T_\eta = T_\eta \circ \pi$. 
Below, we write $\hat m$ to denote the natural Lebesgue measure on $\hat I$. 
\begin{Proposition}[\cite{Me}]\label{prop:Metzger}
There exists $\xi_0 > 0$ such that for all $\xi \in [0, \xi_0)$, the random system $\hat T_\eta$ on $\hat I$
admits a unique BV stationary density $\hat f_\xi$ with respect to Lebesgue measure $\hat m$ on $\hat I$. 
Moreover, these densities have the property that 
\[
\hat f_\xi \to \hat f_0 \quad \text{ in } BV
\]
as $\xi \to 0$. 
\end{Proposition}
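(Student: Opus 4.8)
The plan is to transfer the problem to the tower $\hat I$, where the dynamics $\hat T$ is uniformly expanding, and to run a perturbative spectral argument there. First I would record the structural facts about the tower established in \cite{Me, MePrior}: $\hat T : \hat I \circlearrowleft$ is a full--branch, piecewise smooth, uniformly expanding Markov map for the partition $\{E_k\}$, with uniformly bounded distortion, and the level sizes $\hat m(E_k)$ are summable. Consequently, on the space $BV(\hat I)$ of functions of summable (suitably weighted) bounded variation \emph{within} the levels $E_k$, the deterministic transfer operator $\hat{\mathcal L}_0 = \hat{\mathcal L}_{\hat T}$ satisfies a Lasota--Yorke inequality and has a spectral gap: $1$ is a simple eigenvalue with strictly positive eigendensity $\hat f_0$, the rest of the spectrum lying in a disk of radius $\theta_0 < 1$. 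I would then introduce the averaged random transfer operators $\hat{\mathcal L}_\xi = \int \hat{\mathcal L}_{\hat T_\eta}\,\rho^\xi(\eta)\,d\eta = R_\xi \hat{\mathcal L}_{\hat T}$, where $R_\xi$ is the noise--averaging (periodic--convolution--type) operator on $\hat I$ induced through the entwining relation $\pi \circ \hat T_\eta = T_\eta \circ \pi$.

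The first technical step is a \emph{uniform} Lasota--Yorke inequality: there are constants $A,B>0$, $\theta \in (\theta_0,1)$ and a threshold $\xi_0>0$ with
\[
\|\hat{\mathcal L}_\xi^n g\|_{BV(\hat I)} \le A\,\theta^n\,\|g\|_{BV(\hat I)} + B\,\|g\|_{L^1(\hat m)} \qquad \text{for all } n\ge 1,\ \xi \in [0,\xi_0).
\]
This is the engine of Metzger's analysis: for small $\xi$ the noise amplitude is small compared with the sizes of the dynamically relevant pieces of the tower, so the expansion and distortion estimates underlying the deterministic inequality persist with uniform constants, and I would invoke \cite{Me, MePrior} for it (together with the extension technique of \cite{Ni} if one needs to push $\xi_0$ further). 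Combining the uniform Lasota--Yorke inequality with the elementary estimate $\|\hat{\mathcal L}_\xi - \hat{\mathcal L}_0\|_{BV(\hat I)\to L^1(\hat m)} \to 0$ as $\xi \to 0$ --- averaging against a kernel of width $O(\xi)$ costs $O(\xi)$ in $L^1$ against a $BV$ function --- the Keller--Liverani perturbation theorem applies. Since $\hat{\mathcal L}_\xi$ is positive and integral--preserving its leading eigenvalue is always $1$; Keller--Liverani then gives that, for $\xi<\xi_0$ (shrinking $\xi_0$ if necessary), this eigenvalue stays simple and isolated, so $\hat f_\xi$ exists and is unique in $BV(\hat I)$, $\sup_{\xi<\xi_0}\|\hat f_\xi\|_{BV(\hat I)}<\infty$, and $\hat f_\xi \to \hat f_0$ in $L^1(\hat m)$ (this recovers the $L^1$ stability of \cite{Me}).

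The remaining point --- upgrading $L^1$ convergence to $BV$ convergence --- is the one I expect to be the main obstacle, since $R_\xi$ does \emph{not} converge to the identity in the $BV(\hat I)\to BV(\hat I)$ operator norm (it smooths discontinuities at scale $\xi$), so no off--the--shelf perturbation theorem yields it. The resolution exploits the full--branch Markov structure of $\hat T$: $\hat{\mathcal L}_{\hat T}$ maps $BV(\hat I)$ into the subspace of functions that are uniformly Lipschitz \emph{within each level} --- the only discontinuities are across the level boundaries $\partial E_k$, which carry no cost in $BV(\hat I)$ --- with Lipschitz norm controlled by the bounded--distortion constants times the $BV$ norm of the input. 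Hence $\hat f_\xi = R_\xi\hat{\mathcal L}_{\hat T}\hat f_\xi$ is, uniformly in $\xi<\xi_0$, Lipschitz within each level, and on such functions $R_\xi \to \Id$ in $BV(\hat I)$: away from the $\xi$--neighbourhoods of $\partial E_k$, $R_\xi$ acts as honest convolution with $\rho^\xi$, which tends to the identity in $BV$ on Lipschitz functions, while the contribution from the $\xi$--neighbourhoods of $\partial E_k$ is $O(\xi)$ per level and summable over $k$.

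To conclude, I would write $\hat f_\xi - \hat f_0 = R_\xi\hat{\mathcal L}_{\hat T}(\hat f_\xi - \hat f_0) + (R_\xi - \Id)\hat f_0$ and use that $\Id - R_\xi\hat{\mathcal L}_{\hat T}$ is boundedly invertible on the space of mean--zero $BV(\hat I)$ functions, with norm uniform in $\xi<\xi_0$ (the uniform spectral gap from the second step); this yields $\|\hat f_\xi - \hat f_0\|_{BV(\hat I)} \le C\,\|(R_\xi - \Id)\hat f_0\|_{BV(\hat I)} \to 0$. The care required in handling the level--boundary edge effects of $R_\xi$, and in checking that the Lipschitz--per--level bound on $\hat{\mathcal L}_{\hat T}$ is compatible with the weighted $BV$ norm on the tower, is the portion of the argument I expect to demand the most bookkeeping; the uniform Lasota--Yorke inequality itself is essentially imported from \cite{Me, MePrior}.
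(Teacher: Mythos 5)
The paper does not prove Proposition~\ref{prop:Metzger}: it is stated as a citation to Metzger \cite{Me} (supplemented by \cite{MePrior}), and the authors explicitly remark just above it that Metzger's \emph{stated} theorem is only $L^1$ strong stochastic stability of the base map $T$, so that the $BV$ convergence on the tower has to be extracted from Metzger's construction rather than quoted. There is therefore no in-paper argument to compare your sketch against; what you have done is reconstruct the extraction the authors leave implicit.

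Your reconstruction is reasonable and in the same spirit as the Baladi--Viana--type spectral analysis that Metzger follows: uniform Lasota--Yorke on the tower, a Keller--Liverani (or equivalent) perturbation step for existence/uniqueness and $L^1$ stability, and a smoothing argument to upgrade to $BV$. The last step is the genuine content, and your mechanism --- $\hat{\mathcal L}_{\hat T}$ regularizes $BV$ into Lipschitz-within-levels, on which $R_\xi \to \Id$ in $BV$, combined with the fixed-point identity $\hat f_\xi - \hat f_0 = (\Id - \hat{\mathcal L}_\xi)^{-1}(R_\xi - \Id)\hat f_0$ and a uniform bound on the resolvent on mean-zero functions --- is a clean and correct way to get it, \emph{provided} the structural premises hold. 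Two of those premises deserve more care than the sketch gives them. First, the factorization $\hat{\mathcal L}_\xi = R_\xi\hat{\mathcal L}_{\hat T}$ is not automatic on the tower: the random lifts $\hat T_\eta$ are only characterized by the entwining relation $\pi\circ\hat T_\eta = T_\eta\circ\pi$, and because the noise moves points across the boundaries $\partial B_k$, the induced operator $R_\xi$ mixes tower levels in a way that is part of Metzger's construction rather than a simple fiberwise convolution. Your argument needs $R_\xi$ still to be a Markov averaging operator, to converge to the identity in $L^1$ on $BV$ at rate $O(\xi)$ (for Keller--Liverani), and to converge to the identity in $BV$ on Lipschitz-within-level inputs with a level-summable error --- none of which is self-evident from the entwining relation alone and all of which should be checked against the specific tower in \cite{Me, MePrior}. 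Second, the claim that $\hat{\mathcal L}_{\hat T}$ maps $BV(\hat I)$ into uniformly Lipschitz-within-level functions requires the tower map to be full-branch Markov with uniformly bounded $C^1$ distortion in a weighted sense compatible with whatever $BV(\hat I)$ norm makes the Lasota--Yorke inequality hold; since the level sizes $\hat m(E_k)$ are only summable, the weights here are not cosmetic. Neither of these is a flaw in the approach --- they are exactly the bookkeeping you flag at the end --- but in a written proof they would be the load-bearing verifications, not footnotes.
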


\begin{Corollary}
Condition (B)(iv) holds, i.e., $\lambda_{\rm base}(\xi) \to \lambda_{\rm base}(0)$ as $\xi \to 0$. 
\end{Corollary}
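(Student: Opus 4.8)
The plan is to deduce condition (B)(iv) from equation~\eqref{eq:statFormBaseLE}, which gives $\lambda_{\rm base}(\xi) = \int_{-1}^1 \log|T'(x)|\, f_\xi(x)\,dx$, together with the $BV$-convergence $\hat f_\xi \to \hat f_0$ of the tower densities from Proposition~\ref{prop:Metzger}. The point to bear in mind throughout is that $\log|T'|$ is \emph{unbounded}: since $T'(x) = \rho s\,|x|^{s-1}$, it has a logarithmic singularity at the critical point $x = 0$ (though it is bounded above and lies in $L^1(dx)$). In particular the $L^1$-convergence $f_\xi \to f_0$ -- the strong stochastic stability already established in \cite{Me} -- does not by itself let one pass to the limit inside $\int \log|T'|\, f_\xi\,dx$, which is exactly why we use the finer information carried by the tower.

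First I would identify $f_\xi$ as a pushforward of $\hat f_\xi$. From the entwining relation $\pi \circ \hat T_\eta = T_\eta \circ \pi$, valid for every $\eta$, together with $\pi_* \hat m = \Leb$, the probability measure $\pi_*(\hat f_\xi\,\hat m)$ is $T_\eta$-stationary and absolutely continuous; by uniqueness of the a.c.\ stationary measure -- condition (B)(iii) for $\xi > 0$, condition (B)(i) for $\xi = 0$ -- it coincides with $\mu_\xi = f_\xi\,\Leb$. Hence $\int_{-1}^1 \phi\, f_\xi\,dx = \int_{\hat I} (\phi \circ \pi)\,\hat f_\xi\,d\hat m$ for every $\phi \in L^1(\Leb)$, and taking $\phi = \log|T'| \in L^1(\Leb)$ and using \eqref{eq:statFormBaseLE} gives
\[
\lambda_{\rm base}(\xi) \;=\; \int_{\hat I} \bigl( \log|T'| \circ \pi \bigr)\,\hat f_\xi\, d\hat m, \qquad \xi \in [0,\xi_0),
\]
where $\log|T'|\circ\pi \in L^1(\hat m)$ since $\int_{\hat I} \bigl|\log|T'|\circ\pi\bigr|\,d\hat m = \int_{-1}^1 \bigl|\log|T'(x)|\bigr|\,dx < \infty$.

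It then remains to take $\xi \to 0$ inside this integral. By Proposition~\ref{prop:Metzger}, $\hat f_\xi \to \hat f_0$ in $BV$, and since the $BV$-type norm on $\hat I$ used in \cite{Me, MePrior} dominates $\|\cdot\|_{L^\infty(\hat I)}$, we get $\|\hat f_\xi - \hat f_0\|_{L^\infty(\hat I)} \to 0$; therefore
\[
\bigl| \lambda_{\rm base}(\xi) - \lambda_{\rm base}(0) \bigr| \;\le\; \|\hat f_\xi - \hat f_0\|_{L^\infty(\hat I)}\int_{-1}^1 \bigl|\log|T'(x)|\bigr|\,dx \;\longrightarrow\; 0 \quad \text{as } \xi \to 0,
\]
which is condition (B)(iv). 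The step I expect to require the most care is the bookkeeping around the cited construction: confirming that the Metzger random tower $\hat T_\eta$ entwines $T_\eta$, that $\pi_*\hat m = \Leb$, and that the space in which Proposition~\ref{prop:Metzger} provides convergence controls the sup norm and contains $\log|T'|\circ\pi$. These are standard features of the towers in \cite{Me, MePrior}, but they are where the argument leans on the cited literature rather than on the abstract framework of this paper.
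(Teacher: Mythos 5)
Your proof is essentially the paper's argument, made more explicit. Both proceed by (a) using the entwining relation $\pi\circ\hat T_\eta = T_\eta\circ\pi$ together with uniqueness of stationary measures to identify $\pi_*(\hat f_\xi\,d\hat m) = f_\xi\,dx$, (b) rewriting $\lambda_{\rm base}(\xi) = \int_{\hat I}(\log|T'|\circ\pi)\,\hat f_\xi\,d\hat m$, and (c) passing to the limit using the $BV$-convergence $\hat f_\xi\to\hat f_0$ from Proposition~\ref{prop:Metzger}. You also correctly flag the essential subtlety: $\log|T'|$ is merely $L^1$, not bounded, so strong stochastic stability ($L^1$ convergence of $f_\xi$) is not enough, which is exactly why the tower information is required.

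The one place your argument is more aggressive than the paper's is Step 4, the claim that the tower $BV$-norm used in \cite{Me, MePrior} dominates $\|\cdot\|_{L^\infty(\hat I)}$. Baladi--Viana-type tower norms are typically of the form $\sum_k \omega_k\bigl(\operatorname{Var}_{E_k}\varphi + \cdots\bigr)$ with level-dependent weights $\omega_k$ that decay with $k$; if the weights decay, smallness in the norm does \emph{not} in general force smallness of $\sup_{E_k}|\varphi|$ uniformly over $k$. The paper's phrasing --- ``exchanging the summation and limit'' over the decomposition $\lambda_{\rm base}(\xi) = \sum_k\int_{B_k}\log|T'(\pi(\hat x))|\,\hat f_\xi(\hat x)\,d\hat m(\hat x)$ --- is the more robust route: it only needs level-by-level convergence plus a uniform summable majorant (which the $BV$ bound, together with $\log|T'|\circ\pi\in L^1(\hat m)$, supplies), rather than a global sup bound across all levels. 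So your argument is correct in spirit and takes the same route, but you should either verify from Metzger's precise definition that the tower norm really does control $L^\infty(\hat I)$, or replace Step~4 by a dominated-convergence argument over the sum in $k$, as the paper implicitly does.
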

\begin{proof}
By the entwining property $\pi \circ \hat T_\eta = T_\eta \circ \pi$, it follows that $\pi_* ( \hat f_\xi d\hat m) = f_\xi dx$, 
and so 
\begin{align*}
\lambda_{\rm base}(\xi) & = \int_{-1}^1 \log |T'(x)| f_\xi dx = \int_{-1}^1 \log |T'(x)| \pi_* ( \hat f_\xi d \hat m)(x) \\
& = \sum_{k \in \mathbb Z} \int_{B_k} \log |T'(\pi(\hat x))| \hat f_\xi(\hat x) d \hat m(\hat x) \,. 
\end{align*}
for all $\xi \in [0,\xi_0]$, $\xi_0$ as in Proposition \ref{prop:Metzger}. 
The convergence $\lambda_{\rm base}(\xi) \to \lambda_{\rm base}(0)$ as in Condition (B)(iv) now follows
from BV convergence of $\hat f_\xi \to \hat f_0$ on exchanging the summation and limit. 
\end{proof}

\subsubsection{Condition (C)}
\begin{Lemma}
If $\rho=2$ and $s>2.67835$ then
\[
	\frac{1}{2}\int_{-1}^1\log(|T'|)dx<0
\]
\end{Lemma}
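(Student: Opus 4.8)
The plan is to reduce the inequality to an elementary estimate for a single-variable function. Since $\rho = 2$, for $x \neq 0$ one has $T'(x) = 2s|x|^{s-1}$, hence
\[
\log|T'(x)| = \log(2s) + (s-1)\log|x| \,,
\]
which lies in $L^1(dx)$ because $\log|x|$ is integrable on $[-1,1]$ (this is of course already built into the standing hypothesis that $\log|T'|\in L^1$). Using $\int_{-1}^1 \log|x|\,dx = 2\int_0^1 \log x\,dx = 2\big[x\log x - x\big]_0^1 = -2$, together with the fact that $\log(2s)$ is constant in $x$, I would compute
\[
\frac12\int_{-1}^1 \log|T'(x)|\,dx = \frac12\Big(2\log(2s) - 2(s-1)\Big) = \log(2s) - (s-1) \,.
\]
Thus the statement is equivalent to showing that $h(s) := \log(2s) - (s-1) < 0$ whenever $s > 2.67835$.

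Next I would study $h$ on $(1,\infty)$. It is smooth with $h'(s) = \tfrac1s - 1 < 0$ for $s > 1$, so $h$ is strictly decreasing there, and the equation $h(s)=0$ (equivalently $2s = e^{s-1}$, a Lambert-$W$-type relation with solution $s_0 = -W_{-1}(-1/(2e))$) has a unique root $s_0 \in (1,\infty)$. By monotonicity it therefore suffices to verify that $h(2.67835) \le 0$, i.e. $2\cdot 2.67835 \le e^{1.67835}$; then $h(s) < h(2.67835) \le 0$ for every $s > 2.67835$. Numerically $s_0 \approx 2.6783$, so the threshold $2.67835$ in the statement is an upper bound for $s_0$, accurate to the displayed digits.

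The only genuine work is the final numerical verification — confirming that $2.67835$ really lies to the right of the root of $\log(2s) = s-1$. This is a routine estimate (e.g.\ comparing $\ln(5.3567)$ with $1.67835$ via a few terms of the Taylor series of the exponential around $\ln 5$); the margin is small but strictly on the correct side. Everything else — differentiating $T$, evaluating $\int_{-1}^1 \log|x|\,dx$, and the monotonicity of $h$ — is immediate, so I do not anticipate any real obstacle: the lemma is essentially a computation packaged as a transcendental inequality.
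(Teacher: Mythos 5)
Your computation is correct and is the same as the paper's: both derive the closed form $\frac12\int_{-1}^1\log|T'|\,dx = \log(\rho s)+1-s$, observe that for $\rho=2$ this is strictly decreasing in $s$, and reduce to checking that the unique root of $\log(2s)=s-1$ lies at or below $2.67835$. The only difference is in the final numerical step: the paper certifies the root lies in $[2.67834,2.67835]$ via a rigorous Interval Newton Method, whereas you propose a hand Taylor estimate, which in principle works but will require care given how thin the margin is (the root is genuinely within $10^{-5}$ of the threshold).
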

\begin{proof}
This follows from direct computation: 
\[
\lambda(\rho, s) = \frac{1}{2}\int_{-1}^1\log(|T'|)dx = \log(\rho)+\log(s)+1-s.
\]
The zero of $\lambda(2, s)$ is contained in $[2.67834, 2.67835]$; this interval is computed through 
the use of a rigorous Interval Newton Method \cite{TuVa}. 
\end{proof}
\begin{Remark}
An enclosure of the zero set of $\lambda(\rho, s)$ for $\rho\in [1.00781, 2]$ computed through 
the use of a rigorous Interval Newton Method is plotted 
in figure \ref{fig:zeroline}. Below the zero set the large noise limit is positive, above the
zero set is negative.	
\end{Remark}

\begin{figure}[h]
	\begin{center}
	\includegraphics[width=7cm]{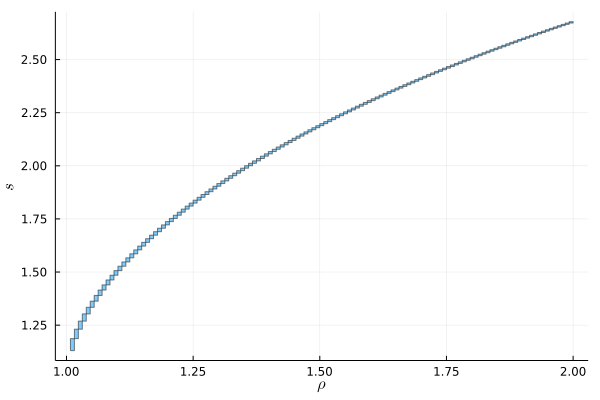}
	\end{center}
	\caption{An eclosure for the zero set of $\lambda(\rho, s)$.}\label{fig:zeroline}
\end{figure}

\begin{Remark}\label{rmk:RovellaParams}
Arguing as in \cite{GaNiPa} it is possible to show that for $C^3$ perturbations 
of the Rovella flow associated to $F$ associated to a Rovella parameter $a$,
 the Poincaré return map is such that
\begin{itemize}
\item the one dimensional map $T_a$ is such that
\[
K_2|x|^{s-1}\leq |T'_a(x)|\leq K_1 |x|^{s-1}
\]
for all $x, a$ where $s=s(a)>1$,
\item $T_a$ is $C^3$ and its derivative depends continuously on $a$
\item there exists positive constants $M_1, M_2$ independent of $a$ such that
\[
M_1|x|^r\leq |\partial_y G_a(x, y)|\leq M_2|x|^r.
\]
\end{itemize}
This allows to extend our result to perturbations of the contracting Lorenz Flow 
corresponding to a Rovella parameter. 
\end{Remark}

\section{Conclusion and outlook}\label{sec:conclusion}

Building off the previous mathematical work \cite{Ni, GaMoNi} on noise-induced order, 
this paper provides a rich class of higher-dimensional systems exhibiting noise-induced order. 
We propose here two potential avenues for future work in this direction: 

\medskip

\noindent {\bf (1) Beyond skew products. } It would be of considerable interest to provide examples of noise-induced order
	beyond the category of skew products. A natural class potentially amenable to this kind of
	analysis is Henon maps and their variants at Benedicks-Carleson-type parameters, e.g., 
	\[
	F_{a, b}(x,y) = (a + x^r + y, b x) \, , \quad b \ll 1 \, , 
	\]
	where $r > 2$ is a fixed parameter, so that $D F$ is a strong contraction along `most' of phase space, suggestive of noise-induced order. 
	
	By now there is a well-developed stochastic stability
	theory for the physical measures of such systems. However, the approach to 
	noise-induced order presented here and in \cite{Ni} requires stochastic stability
	not just of the physical measure but also of the \emph{top Lyapunov exponent} of the system.
	
\medskip 

\noindent {\bf (2) Contracting Lorenz flow. } Many contracting Lorenz models 
experience strong contraction in the vast majority of phase space, leading us to conjecture
that these models also experience noise-induced order. Deducing this from noise-induced order
for the Poincar\'e return map appears to be challenging, however. One concern is that noisy driving 
to the flow itself could destroy the Poincar\'e section with positive probability, 
making it difficult to ``lift'' results for the map to the flow. For this reason, we speculate that
it will be necessary to resort to computer-assisted methods such as those in \cite{GaMoNi} to deduce noise-induced order for contracting Lorenz flow.

\section{Appendix}

\subsection{Background on the Multiplicative Ergodic Theorem}

Below, we write $\sigma_1(A) \geq \sigma_2(A) \geq \cdots$ for the singular value of a matrix $A$, i.e., the eigenvalues of $\sqrt{A^\top A}$ counted with multiplicity. 
Recall that if $A$ is a square $d \times d$ matrix, then $\prod_i \sigma_i(A) = |\det(A)|$. 

Let $T : (X, \Fc, m) \circlearrowleft$ be an ergodic mpt of a probability space
and let $A : X \to M_{d \times d}(\R)$ be a measurable mapping.
For $x \in X, n \in \mathbb Z_{\geq 1}$, let $A^n_x = A_{T^{n-1}x} \circ \cdots \circ A_x$
and assume $\log^+\| A(x)\| = \max\{ \log\|A(x)\|, 0\}$ is in $L^1(m)$.  
By the subadditive ergodic theorem, the limits 
\[
\chi_i = \lim_n \frac1n \log \sigma_i(A^n_x)
\]
exist and are constant over $m$-typical $x \in X$ (possibly $-\infty$; here we take the convention $\log 0 = -\infty$).

Let $\lambda_1 > \lambda_2 > \cdots > \lambda_r \geq -\infty$
denote the distinct values among the $\chi_i$, and let $m_i$ denote the number of occurrences of the value $\lambda_i$ (the multiplicity of $\lambda_i$). For $x \in X, v \in \R^d$, let
\[
\lambda(x, v) = \lim_n \frac1n \log \| A^n_x (v)\| \, , 
\]
when this limit exists. 
\begin{Theorem}[Multiplicative ergodic theorem]
At $m$-a.e. $x \in X$ there is a filtration
\[
\R^d =: F_1(x) \supsetneq F_2(x) \supsetneq \cdots \supsetneq F_r(x) \supsetneq F_{r + 1}(x) := \{ 0 \}
\]
of $\R^d$ into measurably varying subspaces $F_i(x)$ with the property that for all $1 \leq i \leq r$ and for all $v \in F_i(x) \setminus F_{i + 1}(x)$, we have
\[
\lambda(x, v) = \lambda_i \,. 
\]
\end{Theorem}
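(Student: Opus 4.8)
The plan is to follow the Oseledets--Raghunathan approach, proving only the one-sided (forward-filtration) statement asserted here; this is the natural scope given that we assume $\log^+\|A(\cdot)\| \in L^1(m)$ but \emph{not} integrability of $\log^+\|A(\cdot)^{-1}\|$, which is why one obtains a decreasing filtration rather than a splitting and why $\lambda_r = -\infty$ must be permitted. Existence of the exponents $\chi_i$, hence of the distinct values $\lambda_1 > \cdots > \lambda_r$ with multiplicities $m_i$, is already in hand: Kingman's subadditive ergodic theorem, applied to $x \mapsto \log\|\wedge^k A^n_x\| = \sum_{i=1}^k \log\sigma_i(A^n_x)$ for each $1 \le k \le d$ (submultiplicative because $\|\wedge^k(BC)\| \le \|\wedge^k B\|\,\|\wedge^k C\|$, and integrable because $\log^+\|\wedge^k A\| \le k\log^+\|A\|$), gives $\frac1n\log\|\wedge^k A^n_x\| \to \chi_1 + \cdots + \chi_k$ a.e.

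The analytic heart is the Oseledets--Raghunathan lemma: for $m$-a.e.\ $x$ the symmetric positive semidefinite matrices $\Lambda^n_x := \bigl((A^n_x)^\top A^n_x\bigr)^{1/(2n)}$ converge to a limit $\Lambda_x$ with eigenvalues $e^{\lambda_1} > \cdots > e^{\lambda_r}$ (convention $e^{-\infty} := 0$) and multiplicities $m_1, \dots, m_r$, and moreover the corresponding spectral projections converge geometrically fast. I would prove this by induction on $r$. For $r = 1$ with $\lambda_1$ finite (the case $\lambda_1 = -\infty$ is trivial, as then $\Lambda^n_x \to 0$), the eigenvalues of $\Lambda^n_x$ lie in $\bigl[\sigma_d(A^n_x)^{1/n}, \sigma_1(A^n_x)^{1/n}\bigr]$, whose endpoints both tend to $e^{\lambda_1}$ by the previous step; since $\Lambda^n_x$ is symmetric, convergence of all its eigenvalues to a single value forces $\Lambda^n_x \to e^{\lambda_1}\Id$. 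For the inductive step, fix the topmost spectral gap $\lambda_k > \lambda_{k+1}$, set $p := m_1 + \cdots + m_k$, and let $\Pi_n(x)$ be the orthogonal projection onto the span of the top $p$ right-singular vectors of $A^n_x$. Using $A^{n+1}_x = A_{T^n x} A^n_x$ together with the gap and a Davis--Kahan-type perturbation estimate, one bounds $\|\Pi_{n+1}(x) - \Pi_n(x)\| \le C(x)\,e^{-cn}$ for a.e.\ $x$, where $c > 0$ depends on $\lambda_k - \lambda_{k+1}$ and $C(x) < \infty$ a.e.\ follows from the subexponential growth of $\|A_{T^n x}\|$ along orbits (Birkhoff, using $\log^+\|A\| \in L^1(m)$); summing a geometric series shows $\Pi_n(x)$ converges geometrically. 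Then the induced cocycle $x \mapsto \Pi(Tx)\,A_x\,\Pi(x)$ on the limiting top-$p$ bundle, and its counterpart on the complementary bundle, each have strictly fewer distinct exponents and the same $\Lambda^n$-limits up to subexponential corrections; the inductive hypothesis applies to each and reassembles to $\Lambda_x$.

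With $\Lambda_x$ in hand, set $F_i(x) := \{\, v \in \R^d : \limsup_n \frac1n \log\|A^n_x v\| \le \lambda_i \,\}$. This is evidently a linear subspace, is forward invariant (as $\|A^n_{Tx}(A_x v)\| = \|A^{n+1}_x v\|$), and varies measurably in $x$ (it is the range of the a.e.\ limit of the measurable projections onto the bottom $m_i + \cdots + m_r$ right-singular vectors of $A^n_x$). The Raghunathan lemma identifies $F_i(x)$ with the sum of the eigenspaces of $\Lambda_x$ for eigenvalues $e^{\lambda_j}$, $j \ge i$: expanding $\|A^n_x v\|^2 = \langle (A^n_x)^\top A^n_x\, v, v\rangle$ in the eigenbasis of $(A^n_x)^\top A^n_x$ and using the geometric convergence of the spectral projections to kill the cross terms, $\limsup_n \frac1n\log\|A^n_x v\|$ equals $\max\{\lambda_j : \Pi_j v \ne 0\}$, where $\Pi_j$ is the eigenprojection of $\Lambda_x$ onto its $e^{\lambda_j}$-eigenspace. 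In particular $\dim F_i(x) = m_i + \cdots + m_r$, giving the strict inclusions $\R^d = F_1(x) \supsetneq \cdots \supsetneq F_r(x) \supsetneq F_{r+1}(x) := \{0\}$, and for $v \in F_i(x) \setminus F_{i+1}(x)$ we get $\lim_n \frac1n\log\|A^n_x v\| = \lambda_i$. (The lower bound $\liminf \ge \lambda_i$ here can alternatively be seen by passing to the quotient cocycle on $F_i(x)/F_{i+1}(x)$, whose singular-value exponents are all equal to $\lambda_i$ by a determinant count against the restricted cocycles on $F_i$ and $F_{i+1}$; for a cocycle with a single exponent, $\sigma_{\min}(\bar A^n)\,\|\bar v\| \le \|\bar A^n \bar v\| \le \|\bar A^n\|\,\|\bar v\|$ forces every nonzero vector to grow at exactly that rate.)

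The main obstacle is the Raghunathan lemma, and specifically the a.e.\ convergence, with geometric rate, of the singular-vector projections $\Pi_n(x)$: this is where the genuine measure-theoretic content sits and where no soft linear-algebra argument suffices. Two variants are worth noting: one may substitute the Karlsson--Margulis multiplicative ergodic theorem, regarding $\bigl((A^n_x)^\top A^n_x\bigr)$ as an orbit in the nonpositively curved symmetric space $\mathrm{GL}_d(\R)/\mathrm{O}_d(\R)$ and extracting convergence of directions from it; and the degenerate exponents $\lambda_j = -\infty$ (admissible since $\det A(x)$ may vanish) are accommodated by the standard additional bookkeeping separating off the eventual common kernel of the $A^n_x$.
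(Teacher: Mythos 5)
The paper does not prove this theorem: it appears in the Appendix under ``Background on the Multiplicative Ergodic Theorem'' as a classical result stated without proof, and the paper proceeds directly to deduce Corollary~\ref{cor:subspaceGrowth} from it. There is therefore no in-paper argument to compare against. Evaluated on its own terms, your sketch is a correct compressed presentation of the Raghunathan--Ruelle proof of the one-sided Oseledets theorem: Kingman on exterior powers for the exponents, geometric convergence of the singular-vector spectral projections via a Davis--Kahan estimate across each asymptotic gap (using subexponential growth of $\|A_{T^nx}\|$ from Birkhoff applied to $\log^+\|A\|\in L^1$), limit matrix $\Lambda_x$, and then $F_i(x)=\{v:\limsup\tfrac1n\log\|A^n_xv\|\le\lambda_i\}$ identified with the sum of low eigenspaces of $\Lambda_x$. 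You are also right that only the filtration (not a splitting) is available under $\log^+\|A\|\in L^1$ without the corresponding control on $\log^+\|A^{-1}\|$, and that $\lambda_r=-\infty$ must be allowed.

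One small caveat on the framing: the ``induction on $r$'' via restricting to the induced cocycle $\Pi(Tx)A_x\Pi(x)$ on the limiting top-$p$ bundle and ``reassembling'' is not quite how the Raghunathan lemma is usually closed and is the weakest point of your sketch --- the induced cocycle's singular values are not cleanly those of a sub-block of $A^n_x$, and some care is needed to relate them. But once you have, for \emph{every} gap $\lambda_k>\lambda_{k+1}$, the geometric Davis--Kahan bound $\|\Pi_{n+1}-\Pi_n\|\le C(x)e^{-cn}$ on the rank-$(m_1+\cdots+m_k)$ singular projection, you already get convergence of all spectral projections simultaneously, and together with convergence of the $n$th-root singular values this yields $\Lambda^n_x\to\Lambda_x$ directly without any induction. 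So the argument is sound, just organized in a slightly roundabout way.
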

Below, given a square $d \times d$ matrix $A$ and a subspace $V \subset \R^d$, 
we write $\det(A|_V)$ for the determinant of $A|_V : V \to A(V)$, using the convention
$\det(A|_V) := 0 $ if $\dim A(V) < \dim V$.
\begin{Corollary}\label{cor:subspaceGrowth}
There is a full $m$-measure set of $x \in X$ for which the following holds. For any 
$k$-dimensional subspace $V \subset \R^d$, $1 \leq k \leq d$, there are indices
$1 \leq i_1 < i_2 < \cdots < i_k \leq d$ such that
\begin{align}\label{eq:lyapBasisApp}
\lim_n \frac1n \log| \det(A^n_x|_V)| = \chi_{i_1} + \cdots + \chi_{i_k} \,. 
\end{align}
Moreover, if $V' \subsetneq V$ and $\lim_n \frac1n \log|\det(A^n_x|_{V'})| = \chi_{i_1'} + \cdots + \chi_{i_{k'}'}$, $k' = \dim V'$, then
\begin{align}\label{eq:lyapBasisApp2}
\{ i_1' , \cdots, i_{k'}'\} \subsetneq \{ i_1, \cdots, i_k \} \,. 
\end{align}
\end{Corollary}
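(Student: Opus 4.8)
The plan is to deduce Corollary \ref{cor:subspaceGrowth} from the Multiplicative Ergodic Theorem by analyzing how a subspace $V$ sits relative to the Oseledets filtration $F_1(x) \supsetneq \cdots \supsetneq F_{r+1}(x) = \{0\}$. First I would fix $x$ in the full-measure set where the MET holds and where, additionally, the limits $\lim_n \frac1n \log \sigma_i(A^n_x)$ all exist; I would also want the conclusion of the MET to hold for the inverse cocycle (or, equivalently, to have an Oseledets splitting rather than merely a filtration) so that vectors in a complementary subspace to $F_{i+1}(x)$ inside $F_i(x)$ grow at \emph{exactly} rate $\lambda_i$, not just at most. Given $V$ of dimension $k$, the natural invariant to attach to it is the jump pattern of $\dim(V \cap F_j(x))$ as $j$ runs from $1$ to $r+1$: each time this dimension drops, say by $\ell_j$ as we pass from $F_j$ to $F_{j+1}$, it contributes $\ell_j$ copies of the exponent $\lambda_j$ to the multiset $\{\chi_{i_1}, \dots, \chi_{i_k}\}$. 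Concretely, I would choose a basis $v_1, \dots, v_k$ of $V$ adapted to the flag $V \cap F_1 \supseteq V \cap F_2 \supseteq \cdots$, so that each $v_m$ lies in some $F_{j(m)}(x) \setminus F_{j(m)+1}(x)$ and hence $\lambda(x, v_m) = \lambda_{j(m)}$.

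The key computation is then to show $\lim_n \frac1n \log |\det(A^n_x|_V)| = \sum_{m=1}^k \lambda_{j(m)}$. Using the adapted basis, $|\det(A^n_x|_V)|$ equals the $k$-dimensional volume of the parallelepiped spanned by $A^n_x v_1, \dots, A^n_x v_k$, which I would bound above and below by a fixed multiple (depending on the basis but not on $n$) of $\prod_m \|A^n_x v_m\|$ — the upper bound by Hadamard's inequality, the lower bound by the fact that the angles between the relevant Oseledets subspaces stay bounded away from degeneracy along the orbit, or more simply by noting that the $v_m$ with distinct growth rates become asymptotically orthogonal while within a single Oseledets block the volume is comparable to the product of norms up to a subexponential factor. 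Taking $\frac1n \log$ and sending $n \to \infty$ gives $\sum_m \lambda(x,v_m) = \sum_m \lambda_{j(m)}$. Matching this multiset of $\lambda$-values, with multiplicities, to a sub-multiset of $\{\chi_1, \dots, \chi_d\}$ and then selecting increasing indices $i_1 < \cdots < i_k$ realizing those values yields \eqref{eq:lyapBasisApp}. I should also handle the degenerate case where some $\lambda_i = -\infty$: if $V$ meets $F_r(x)$ (the kernel part), then $\det(A^n_x|_V) \to 0$ at rate $-\infty$ and the formula still reads correctly with the convention $\log 0 = -\infty$.

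For the monotonicity statement \eqref{eq:lyapBasisApp2}, the point is that the index set assigned to $V$ is determined by the jump pattern of $\dim(V \cap F_j(x))$, and for $V' \subsetneq V$ we have $\dim(V' \cap F_j) \le \dim(V \cap F_j)$ for every $j$, with the total drop being $k' < k$. A short combinatorial argument — essentially that the multiset of exponents for $V'$ is obtained from that of $V$ by deleting $k - k'$ entries, each at a level where $V$ has strictly more intersection than $V'$ — shows the realizing index sets can be chosen nested, $\{i_1', \dots, i_{k'}'\} \subsetneq \{i_1, \dots, i_k\}$. One subtlety is that the indices are not canonically defined when $\lambda_i$ has multiplicity $m_i > 1$ (any $m_i$ of the corresponding $\chi$-indices would do), so I would phrase the selection so that, having fixed the choice for $V$, the choice for $V'$ is made as a sub-selection; this is a matter of bookkeeping rather than a genuine difficulty.

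The main obstacle I anticipate is the lower bound in the volume estimate $|\det(A^n_x|_V)| \gtrsim \prod_m \|A^n_x v_m\|$, since a priori the images $A^n_x v_m$ could align and make the parallelepiped volume decay faster than the product of edge lengths. Controlling this requires genuinely using Oseledets \emph{regularity} — that angles between distinct Oseledets subspaces decay at most subexponentially along the orbit — rather than the filtration form of the theorem alone. The cleanest route is to invoke the two-sided MET (splitting into Oseledets subspaces $E_i(x)$ with $\bigoplus_{j \ge i} E_j(x) = F_i(x)$), decompose each basis vector accordingly, and use that the angle between $\bigoplus_{j \le i} A^n_x E_j$ and $\bigoplus_{j > i} A^n_x E_j$ is subexponentially close to nonzero; within a single block $E_i$ one uses that $A^n_x|_{E_i}$ has all singular values comparable to $e^{n\lambda_i}$ up to subexponential error. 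I would cite a standard reference for this regularity rather than reprove it. Everything else — the adapted-basis construction, Hadamard's inequality, the combinatorics of \eqref{eq:lyapBasisApp2} — is routine.
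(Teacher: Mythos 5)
Your argument matches the paper's (very brief) proof sketch: both construct a Lyapunov basis adapted to the flag $V \cap F_j(x)$ (and, for the nesting claim, chosen compatibly with the flag $V' \cap F_j(x)$), and then use Lyapunov regularity to identify $\lim_n \tfrac1n\log|\det(A^n_x|_V)|$ with $\sum_m \lambda(x,v_m)$, the paper differing only in that it omits the details you supply. One small caution: since the driving system $\tau$ here is a one-sided shift (non-invertible), the two-sided Oseledets splitting you propose as ``the cleanest route'' is not directly available, so the regularity step should instead be cited in the forward-regularity form that accompanies the one-sided Multiplicative Ergodic Theorem.
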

\begin{proof}[Proof sketch]
Recall that a Lyapunov basis is a basis $\{ v_1, \cdots, v_d\}$
of $\R^d$ such that $\lambda(x, v_i) = \chi_i$ for each $1 \leq i \leq d$. 
The proof of \eqref{eq:lyapBasisApp} is to construct a \emph{Lyapunov basis} at $x$ containing
a set of $k$ vectors which span $V$. To establish \eqref{eq:lyapBasisApp2}, one constructs
 a Lyapunov basis containing $k'$ vectors spanning $V'$ and $k$ vectors spanning $V$; from 
 $V' \subsetneq V$ it is immediate that the $k'$ vectors spanning $V'$ are contained among the $k$
 vectors spanning $V$. Further details are omitted. 
\end{proof}

\subsection{Completing the proof of Corollary \ref{cor:lyapRegSkewProd}}\label{sec:appConvLE}

We present here the argument that for Leb-almost every $(x,y) \in [-1,1]^{d + 1}$, we have that
\begin{align}\label{eq:lyapRegApp}
\chi_1(\xi) = \lim_{n \to \infty} \frac1n \log || D_{(x,y)} F^n_\uo|| \quad \text{ with probability 1.}
\end{align}

\subsubsection{Preliminaries}
 We will consider an auxiliary random dynamical system $\hat F^n_\uo$ on $[-1,1]^{d + 1}$
obtained by first applying noise and then the map $F$; to wit, for $\omega \in \Omega_0$ define
$\hat F_\omega :[-1,1]^{d + 1} \circlearrowleft$ by
\[
\hat F_\omega(x,y) = F((x,y) + \omega \textrm{ mod 2}) \, , 
\]
and for $\uo = (\omega_1, \omega_2, \cdots) \in\Omega, n \geq 1$ we set $\hat F_\uo = \hat F_{\omega_n} \circ \cdots \circ \hat F_{\omega_1}$. The RDS $(\hat F_\uo^n)$ gives rise to a corresponding Markov
chain $\{ (\hat x_n, \hat y_n)\}$ on $[-1,1]^{d + 1}$ defined for initial $(\hat x_0, \hat y_0) \in [-1,1]^{d + 1}$ by
\[
(\hat x_n, \hat y_n) = \hat F^n_\uo(\hat x_0, \hat y_0) 
\]
with corresponding transition kernel $\hat P((x,y), K) = \P (\hat F_\omega(x,y) \in K)$. 

The advantage of the auxiliary Markov chain is the following regularity property not enjoyed
by the original chain $(x_n, y_n)$. 
\begin{Lemma}
The kernel $\hat P$ is strong Feller, i.e., for any bounded measurable $\varphi : [-1,1]^{d + 1} \to \R$
we have that $\hat P \varphi$ is continuous. 
\end{Lemma}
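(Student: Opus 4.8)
The plan is to recognize $\hat P\varphi$ as the convolution of the bounded function $\varphi\circ F$ with a \emph{fixed} $L^1$ density on the torus $\T^{d+1}:=(\R/2\Z)^{d+1}$, and then to conclude from continuity of translation in $L^1$. Concretely, unwinding the definition of $\hat F_\omega$ and of the transition kernel, for $z=(x,y)\in[-1,1]^{d+1}$ one has
\[
\hat P\varphi(z)=\E_\omega\!\left[\varphi\big(F(z+\omega\bmod 2)\big)\right]=\int_{\T^{d+1}}(\varphi\circ F)(w)\,\hat\rho^\xi(w-z)\,dw,
\]
where $\hat\rho^\xi$ is the density on $\T^{d+1}$ obtained by periodizing the product density $\omega\mapsto\prod_{j=1}^{d+1}\rho^\xi(\omega^j)$ of the noise vector, and where the subtraction $w-z$ and the integral are understood on the torus. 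Note $\hat\rho^\xi\in L^1(\T^{d+1})$ (with total mass $1$), while $g:=\varphi\circ F$ lies in $L^\infty$ since $F$ is piecewise smooth, hence measurable, and $\varphi$ is bounded and measurable.

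From here the estimate is immediate: for a displacement $\delta$, a change of variables on the torus gives
\[
\left|\hat P\varphi(z+\delta)-\hat P\varphi(z)\right|=\left|\int_{\T^{d+1}}g(w)\big[\hat\rho^\xi(w-z-\delta)-\hat\rho^\xi(w-z)\big]\,dw\right|\le\|\varphi\|_\infty\,\big\|\hat\rho^\xi(\cdot-\delta)-\hat\rho^\xi\big\|_{L^1(\T^{d+1})}.
\]
Since translation acts continuously on $L^1(\T^{d+1})$, the right-hand side tends to $0$ as $\delta\to 0$, uniformly in $z$. Hence $\hat P\varphi$ is (uniformly) continuous, which is the assertion.

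The one step that needs genuine care -- and the closest thing to an obstacle -- is the bookkeeping of the ``$\bmod\,2$'' wraparound in the first display: one must verify that the law of $z+\omega\bmod 2$ on $\T^{d+1}$ is exactly the $z$-translate of the fixed periodized density $\hat\rho^\xi$, so that \emph{all} of the $z$-dependence is carried by a single translation. Once this identification is in place the argument is soft: it uses only $\hat\rho^\xi\in L^1$ and in particular requires no continuity of the mother kernel $\rho$ (which is merely BV). I would also record, as a byproduct, that the computation shows $\hat P$ maps $L^\infty$ into the uniformly continuous functions, with a modulus of continuity controlled by $\|\varphi\|_\infty\cdot\|\hat\rho^\xi(\cdot-\delta)-\hat\rho^\xi\|_{L^1}$ independently of $\varphi$ -- a quantitative smoothing that is precisely why the auxiliary chain $(\hat x_n,\hat y_n)$ is preferable to the original one here.
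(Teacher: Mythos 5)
Your proof is correct and takes essentially the same approach as the paper: recognize $\hat P\varphi$ as a fixed-kernel convolution on the torus $\T^{d+1}=(\R/2\Z)^{d+1}$ and invoke convolution smoothing. The paper's terse remark invokes continuity of $L^2*L^2$ convolutions, whereas you pair $L^1$-translation continuity of the periodized kernel $\hat\rho^\xi$ against $\varphi\circ F\in L^\infty$ -- a parallel application of the same idea, marginally weaker in its hypotheses (needing only $\hat\rho^\xi\in L^1$ rather than $L^2$), and with the mod-$2$ change of variables carried out correctly.
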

\noindent This is straightforward and follows from the fact that the convolution of two $L^2$ functions is
continuous; details are omitted. 
By \cite{Seidler}, it follows that $\hat P$ is \emph{ultra Feller}, i.e., the transition kernels 
$(x,y) \mapsto \hat P((x,y), \cdot)$ vary continuously in the TV metric $\textrm{dist}_{TV}$, defined
for Borel probability measures $\mu_1, \mu_2$ by
\[
\textrm{dist}_{TV}(\mu_1, \mu_2) = \frac12 \sup_{A} | \mu_1(A) - \mu_2(A)| \,. 
\]

We will also use the following standard point-set topology fact (proof omitted): 
\begin{Lemma}
	Let $X$ be a compact metric space and let $Y$ be a metric space. 
	Let $\Phi : X \to Y$ be a continuous map. Then, $\Phi$ is uniformly continuous.
\end{Lemma}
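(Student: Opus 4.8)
The plan is to use compactness of $X$, which is the only ingredient beyond continuity; I will give the finite-subcover proof as the main line and record the sequential-compactness variant afterwards.

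First, fix $\epsilon > 0$. By continuity of $\Phi$, for each $x \in X$ there is $\delta_x > 0$ such that $d_Y(\Phi(x'), \Phi(x)) < \epsilon/2$ whenever $d_X(x', x) < \delta_x$. The collection of open balls $\{ B(x, \delta_x/2) : x \in X \}$ covers $X$, so by compactness there is a finite subcover $B(x_1, \delta_{x_1}/2), \dots, B(x_N, \delta_{x_N}/2)$. Set $\delta := \tfrac12 \min_{1 \le i \le N} \delta_{x_i}$, which is strictly positive. Now suppose $x, x' \in X$ satisfy $d_X(x,x') < \delta$, and choose $i$ with $x \in B(x_i, \delta_{x_i}/2)$. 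Then $d_X(x, x_i) < \delta_{x_i}/2 \le \delta_{x_i}$, while by the triangle inequality $d_X(x', x_i) \le d_X(x', x) + d_X(x, x_i) < \delta + \delta_{x_i}/2 \le \delta_{x_i}$. Hence both $d_Y(\Phi(x), \Phi(x_i)) < \epsilon/2$ and $d_Y(\Phi(x'), \Phi(x_i)) < \epsilon/2$, and one further triangle inequality gives $d_Y(\Phi(x), \Phi(x')) < \epsilon$. Since $\epsilon$ was arbitrary, $\Phi$ is uniformly continuous.

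Alternatively — and perhaps closer to the measure-theoretic spirit of the rest of the paper — one may argue by contradiction using sequential compactness of the compact metric space $X$. If $\Phi$ failed to be uniformly continuous, there would exist $\epsilon_0 > 0$ and sequences $(a_n)$, $(b_n)$ in $X$ with $d_X(a_n, b_n) \to 0$ but $d_Y(\Phi(a_n), \Phi(b_n)) \ge \epsilon_0$ for all $n$. Passing to a subsequence along which $a_n \to a$, we get $b_n \to a$ as well, and continuity of $\Phi$ forces $d_Y(\Phi(a_n), \Phi(b_n)) \to d_Y(\Phi(a), \Phi(a)) = 0$, contradicting the lower bound $\epsilon_0$.

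There is no genuine obstacle in either route: the only point requiring a modicum of care is the factor-of-two bookkeeping in the covering argument — one shrinks the radii to $\delta_x/2$ precisely so that a point within $\delta$ of a covering center $x_i$ still lies within $\delta_{x_i}$ of $x_i$ — or, equivalently, the standard fact that a compact metric space is sequentially compact in the contradiction argument. Everything else is entirely routine, so this can be stated briefly or simply cited; I would include the short covering proof for self-containedness.
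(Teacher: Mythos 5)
Your proof is correct: both the finite-subcover argument and the sequential-compactness variant are the standard proofs of the Heine--Cantor theorem, and the factor-of-two bookkeeping in the covering version is handled properly. The paper states this lemma as a standard point-set topology fact with the proof explicitly omitted, so there is no argument to compare against; either of your routes would serve.
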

Since $[-1,1]^{d + 1}$ is compact, it follows that $(x,y) \mapsto \hat P((x,y), \cdot)$ is uniformly continuous
in $TV$. 

\subsubsection{Proof of \eqref{eq:lyapRegApp}}
Let $\hat A \subset [-1,1]^{d + 1}$ denote the set where
\[
\chi_1(\xi) = \lim_{n \to \infty} \frac1n \log ||D_{(x,y)} \hat F^n_\uo|| \quad \text{ with probability 1} \,. 
\]
and observe the following: 
\begin{itemize}
	\item[(i)] $\hat A$ and the set of $(x,y)$ where \eqref{eq:lyapRegApp} holds differ on a zero Lebesgue measure set, hence it suffices to prove $\Leb(\hat A) = 1$; and
	\item[(ii)] to show $\Leb(\hat A) = 1$, it suffices to show that for Leb-almost every fixed initial $(\hat x_0, \hat y_0) = (x,y)$, the stopping time
\[
T_{\hat A} = \min\{ n \geq 1 :(\hat x_n, \hat y_n) \in \hat A\}
\]
is almost-surely finite. 
\end{itemize}
Items (i) and (ii) follow from the identity 
\[
 F^{n + 1}_\uo(x,y) = \omega_{n +1} + \hat F^n_\uo\circ F(x,y) 
\]
and the fact that $D_{(x,y)} F$ is nonsingular almost-everywhere. 

To prove $T_{\hat A}$ is almost-surely finite, 
observe that $\hat P((x,y) , \hat A) = 1$ for all $(x,y) \in \hat A$. Using $TV$ uniform continuity
of $(x,y) \mapsto \hat P((x,y), \cdot)$, fix $\delta > 0$ so that $\hat P((x,y), \hat A) \geq 1/2$ 
whenever $\textrm{dist}((x,y), \hat A) < \delta$. Now, given a 
fixed, Leb-typical initial $(\hat x_0, \hat y_0) = (x,y)$, 
there is some $y' \in [-1,1]^{d + 1}$ such that $(x, y') \in \hat A$ (this uses that the stationary measure $\hat \nu_\xi$ projects to a measure $\hat \mu_\xi$ on the $x$-coordinate with density $> 0$). Let $N \geq 1$ be such that $2 c^N < \delta$, where $c \in (0,1)$ is as in condition (F), and observe that for our Lebesgue-typical $(x,y)$, we have that
\[
\textrm{dist}(\hat F^n_\uo(x,y), \hat A) \leq | \hat F^n_\uo(x,y) - \hat F^n_\uo(x, y')| \leq 2 c^n < \delta
\]
for all $n \geq N$.

We now check by induction that 
\begin{align}\label{inductionapp}
\P((\hat x_{N + j}, \hat y_{N + j}) \notin \hat A, 1 \leq j \leq i) \leq 2^{-i}
\end{align}
for all $i \geq 1$. Assuming this, we immediately obtain $\P(T_{\hat A} > N + i) \leq 2^{-i}$ which 
implies $T_{\hat A} < \infty$ with probability 1. 

To prove \eqref{inductionapp}: in the case $i = 1$ we have
\[
\P((\hat x_{N + 1}, \hat y_{N + 1}) \notin \hat A) \leq \P(\hat F^{N + 1}_\uo(x,y) \notin \hat A) 
= \int \hat P^N((x,y), d(\hat x, \hat y)) \hat P((\hat x, \hat y), \hat A^c) \leq \frac12
\]
since $\hat P^n((x,y), \cdot)$ as a measure assigns full probability to the set where $\textrm{dist}((\hat x, \hat y), \hat A) < \delta$ for all $n \geq N$. 
Assuming $\P(T_{\hat A} > N + i) \leq 2^{-i}$, we now have
\begin{align*}
\P((\hat x_{N + j}, \hat y_{N + j}) \notin \hat A, 1 \leq j \leq i + 1) & = \P((\hat x_{N + i + 1} , \hat y_{N + i + 1}) \notin \hat A | (\hat x_{N + j}, \hat y_{N + j}) \notin \hat A, 1 \leq j \leq i) \\
& \times \P((\hat x_{N + j}, \hat y_{N + j}) \notin \hat A, 1 \leq j \leq i) \\
& = \P((\hat x_{N + i + 1} , \hat y_{N + i + 1}) \notin \hat A | (\hat x_{N + i}, \hat y_{N + i}) \notin \hat A)
 \times 2^{-i} \, , 
\end{align*}
on combining the induction hypothesis and the Markov property in the last line. Now, 
\begin{gather*}
\P((\hat x_{N + i + 1}, \hat y_{N + i + 1}) \notin \hat A | (\hat x_{N + i}, \hat y_{N + i}) \notin \hat A) \\
 = \frac{1}{\hat P^{N + i}((x,y), \hat A^c)} \int_{(\hat x, \hat y) \notin \hat A} \hat P^{N + i} ((x,y), d(\hat x, \hat y))  \hat P((\hat x, \hat y), \hat A^c) \\
 \leq 1/2 \, , 
\end{gather*}
 completing the proof of \eqref{inductionapp}. \hfill $\square$

\end{document}